\numberwithin{equation}{section}
\theoremstyle{plain}
\newtheorem{theorem}{Theorem}[section]
\newtheorem{Example}[theorem]{Example}
\newtheorem{lemma}[theorem]{Lemma}
\newtheorem{Proposition}[theorem]{Proposition}
\newtheorem{corollary}[theorem]{Corollary}
\newtheorem{definition}[theorem]{Definition}
\theoremstyle{remark}
\newtheorem{remark}[theorem]{Remark}
\newcommand{\projHM}{\projH_{H,\mfd}}
\newcommand{\modtwo}{\;(\mathrm{mod }~2)}
\newcommand{\projH}{\mathfrak{p}}
\newcommand{\piunoa}{\pi_1(\mfda,\polo)}
\newcommand{\piuno}{\pi_1(\mfd,\polo)}
\newcommand{\lkn}{\mathrm{link}}
\newcommand{\dabcv}{d_{\abcv}}
\newcommand{\abcv}{\mfd_H}
\newcommand{\abcva}{\mfda_H}
\newcommand{\mfda}{\mfd}
\newcommand{\lacci}{C_{x_0}([0,1]; \mfd)}
\newcommand{\ucov}{\widetilde \mfda}
\newcommand{\uproj}{\projH}
\newcommand{\la}     {\lambda}
\newcommand{\polo}    {x_0}
\newcommand{\ga}    {\gamma}
\newcommand{\arr} {\arrow}
\newcommand{\base} {M}
\newcommand{\chart} {D}
\newcommand{\cut} {\Sigma}
\newcommand{\Part}{\mathrm{Part}}
\newcommand{\Sf}         {\ensuremath{\mathbb S}}
\newcommand{\isom}{f_{\mathbf{\cut}}}
\newcommand{\cvv}{Y_ \mathbf{\cut}}
\newcommand{\BVc}{\domain(\FFF)}
\newcommand{\cutpair}{\mathbf{\cut}}
\newcommand{\newcutpair}{\mathbf{\Gamma}}
\newcommand{\valoreminimocut}{\mathscr A(\cutpair)}
\newcommand{\eps}       {\epsilon}
\newcommand{\hn}{\mathcal H^{\dime-1}}
\newcommand{\dime}{n}
\newcommand{\cover} {Y}
\newcommand{\dde}           {\partial}
\newcommand{\uw}{v}
\newcommand{\mfd} {M}
\newcommand{\pitilde}{ \widetilde{\pi} }
\newcommand{\CutsmenoS}{\cut\setminus S}
\newcommand{\uv}{v}
\newcommand{\id} {()}
\newcommand{\dcv}{d_{\cvv}}
\newcommand{\Hole} {S}
\newcommand{\pione} {\pi_1(\base)}
\newcommand{\unionedisgiunta} {\mathcal X}
\newcommand{\edges} {S}
\newcommand{\looops} {C}
\newcommand{\iwire} {\looops}
\newcommand{\Om} {\Omega}
\newcommand{\domain} {D}
\newcommand{\domainF} {\domain(\FFF)}
\newcommand{\FFF} {\mathcal F}
\newcommand{\R} {\mathbb R}
\newcommand{\SSS} {\mathcal S}
\newcommand{\prj} {p}
\newcommand{\Z} {\mathbb Z}
\definecolor{mygray}{rgb}{0.92,0.92,0.92}
\newif\ifdraft
\title{Covers, soap films and BV functions}
\author{Giovanni Bellettini}
\address{Dipartimento di Ingegneria dell'Informazione e Scienze Matematiche, Universit\`a di Siena, 53100 Siena, Italy,
and International Centre for Theoretical Physics ICTP, 
Mathematics Section, 34151 Trieste, Italy
}
\email{bellettini@diism.unisi.it}
\author{Maurizio Paolini}
\address{Dipartimento di Matematica e Fisica, Universit\`a Cattolica del Sacro Cuore, 25121 Brescia, Italy}
\email{maurizio.paolini@unicatt.it}
\author{Franco Pasquarelli}
\address{Dipartimento di Matematica e Fisica, Universit\`a Cattolica del Sacro Cuore, 25121 Brescia, Italy}
\email{franco.pasquarelli@unicatt.it}
\author{Giuseppe Scianna}
\address{Dipartimento di Ingegneria dell'Informazione e Scienze Matematiche, Universit\`a di Siena, 53100 Siena, Italy}
\email{giuseppe.scianna@unisi.it}
\date{\today}
\begin{document}

\thanks{}

\begin{abstract}
In this paper we review 
the double covers  method with constrained BV
functions for solving the classical  Plateau's problem.
Next, we carefully analyze some interesting examples
of soap films compatible with covers
of degree larger than two: in particular,
the case of a soap film only partially wetting a space curve, 
a soap film spanning a cubical frame but having a large tunnel, 
a soap film that retracts onto its
boundary, hence not modelable with the Reifenberg method,
and various soap films spanning an octahedral frame.
\end{abstract}

\maketitle


\section{Introduction}\label{sec:intro}
In \cite{Br:95} K. Brakke  introduced the covering space method
for solving a rather large class of one-codimensional Plateau type problems,
including the classical case of an area-minimizing surface
spanning a knot, a Steiner minimal graph connecting a given
number of points in the plane, and an area-minimizing surface
spanning a nonsmooth one-dimensional frame such as the 
one-skeleton of a polyhedron. The method 
does not impose any  topological
restriction on the solutions; it
relies on the theory of currents and takes into
account also unoriented objects.
It consists essentially in the construction of a pair of covering spaces, and is based on 
the minimization of what the author called the soap film mass.

Recenlty, a slightly different approach
has been proposed in \cite{AmBePa:17}; it is
based on the minimization of the total
variation for functions defined on a single covering space and
satisfying a suitable contraint on the fibers. 
Also this method 
does not impose any  topological
restriction on the solutions. 
Moreover, it
takes advantage of the full
machinery 
known on the space of BV functions defined on a locally Euclidean manifold: for instance,
and remarkably,
it allows approximating the considered class of Plateau type problems by 
$\Gamma$-convergence. In the forthcoming paper \cite{BePaPaSc:17} 
we shall deepen this 
$\Gamma$-convergence regularization 
for finding minimal networks in the plane.

The interest in the covering space method is also illustrated in 
the recent paper \cite{BePaPa:17}, where is shown  
a triple cover of $\R^3 \setminus (S\cup C)$, $S$ a tetrahedral frame and $C$ two disk boundaries,
compatible with a soap film spanning $S$
and having higher topological type,
more precisely with two tunnels (see Figure \ref{fig:morgan} in the case
of the {\it regular} tetrahedron). 

\begin{figure}
\includegraphics[width=0.43\textwidth]{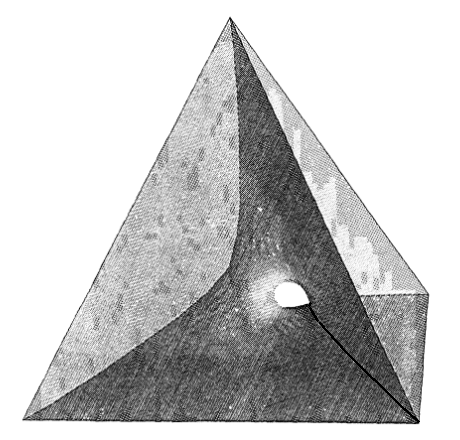}
\caption{\small{
A slightly retouched version of \cite[fig. 1.1.1]{LaMo:94}, see also
\cite[fig. 11.3.2]{Mo:08}. This soap film has two tunnels, one clearly
visible in the picture. This figure was done by Jean Taylor,
following an idea due to Bob Hardt.
}}
\label{fig:morgan}
\end{figure}

The cover described in \cite{BePaPa:17} has the particular feature of
being not normal; in addition, it is constructed using the above mentioned
disks. Similar disks
were  firstly introduced in \cite{Br:95} in other examples,
and called {\it invisible wires} by the author.
In the case of the tetrahedron, they play a crucial role. {}From
one side, they  are necessary to complete the construction of 
 the triple cover; from the other side, they act as an obstacle. In addition,
they allow one to distinguish {\it tight} loops around particular edges of the frame $S$  from
loops turning {\it far} from the edges: this distinction turns out
to be crucial for the modelization of a higher genus soap film.
 The 
results of  \cite{BePaPa:17} strongly suggest that,
for a tetrahedron sufficiently elongated in one direction, the higher-genus surface
has area strictly less then the conical configuration. 

In this paper, for convenience of
the reader we recall (Section \ref{sec:double_covers_of_R3_deprived_by_a_curve}) 
the double covers method and 
BV functions for treating  the classical
Plateau problem. In Section \ref{sec:covers_of_degree_larger_than_two} we point out 
the main modifications
of the construction in the case of covers of degree larger than two.
Next, in Section \ref{sec:examples}
 we continue the analysis in the spirit of \cite{BePaPa:17},
discussing various interesting examples. In Example \ref{exa:a_partially_wetted_curve}
we discuss with some care a classical example due to F.J. Almgren of a soap film only partially wetting
an unknotted curve, see also \cite{Br:95}. 
In Example \ref{exa:soap_film_on_a_cubical_frame}
we describe a cover of $\R^3 \setminus S$, where $S$ is the 
one-skeleton of a cube, which is compatible with 
the soap film depicted in Figure \ref{fig:nscube}. This is obviously
not the most common  soap film one usually finds in 
pictures, which has no holes and has triple curves starting in the corners
\cite[Figure 6]{Ta:76}. It is worthwhile to notice that such a soap film
has area larger than the area of the soap film in 
Figure \ref{fig:nscube}.
In Example \ref{exa:triple_moebius_band} we show how to construct
a triple cover compatible with the soap film of Figure \ref{fig:retract},
which is a surface that retracts on its boundary, and therefore for which 
we cannot apply the Reifenberg method. 
In Example \ref{exa:octahedron} we discuss the case when $S$ is the 
one-skeleton of an octahedron.

We conclude this introduction by mentioning that calibrations, applied to the covering
space method, have been considered in \cite{Br:95},
\cite{Br:95b} and, more recently,
in  \cite{CaPl:17} in connection with the
 BV approach in dimension two.

\section{Double covers of \texorpdfstring{$\Omega\setminus S$}{Omega - S}}
\label{sec:double_covers_of_R3_deprived_by_a_curve}
In this section we describe the cut and paste
method for constructing a double cover of the base space
$M:= \Omega\setminus S$ where, for simplicity, 
$S$ is a smooth
compact embedded  two-codimensional manifold without boundary
and $\Omega$
is a sufficiently large ball of $\R^n$ containing $S$, $n \geq 2$.
Just to fix ideas, one can consider $n=3$ and 
$S$  a tame knot or link\footnote{No
invisible wires will be taken into account in this section.}.
Next, to model the area minimization problem with $S$ as 
boundary datum, 
we 
define a minimum problem on a class of BV functions 
defined on the cover and satisfying a suitable constraint. The projection over the 
base space of 
the jump set of a minimizer will be our definition of solution to the Plateau problem;
this is a simplified
version of the construction described in \cite{AmBePa:17}, to which we refer for all details.
Before starting the 
discussion, it is  worth to recall that, in more general cases (such as
those in Section \ref{sec:examples}),
the cut and paste procedure needs not be 
the most convenient method to work with. 
Indeed, the cover can be equivalently described in two
other  ways. In the first one it is sufficient to declare
 an orientation 
of the cut, and a family of permutations of the strata along
the cut; this family must be consistent, a condition that is obtained 
from the local triviality of the cover. The second method is 
based on an 
abstract construction, by taking the quotient
of  the universal cover of $M$ with respect to a subgroup 
of the fundamental group of $M$; 
at the end of the section we 
recall this construction, while in Section \ref{sec:examples}
we shall use both these two latter methods.

In what follows
we shall always assume that the cover is trivial in a neighbourhood of $\partial \Omega$. Hence,
in that neighbourhood we can speak 
without ambiguities of  sheet one and sheet two,
up to automorphisms of the cover.

\subsection{Cut and paste construction of the double cover}
We start by defining 
a {\it cut} (also called a cutting surface when $n=3$), which is a $(n-1)$-dimensional
compact embedded smooth oriented submanifold  $\Sigma \subset \Omega$
with $\partial \Sigma = S$. 
Next we 
glue two copies (the sheets, or strata) of $M:= \Omega \setminus S$ 
along $\Sigma$ by exchanging the sheets.
Equivalently, we associate 
the permutation $(1 ~ 2)$ to $\Sigma$.\footnote{Note that, being this permutation
of order two, fixing  an orientation of $\Sigma$ is not necessary and
$\Sigma$ could even be nonorientable. For 
covers of degree larger than two and 
other type of permutations
(see Sections \ref{sec:covers_of_degree_larger_than_two} and \ref{sec:examples}) orientability of $\Sigma$
is necessary.}

To figure out the construction,
it is convenient to 
``double'' $\Sigma$, namely to slightly separate two copies of $\Sigma$
having boundary $S$ and meeting only at $S$; we call these two copies 
$\Sigma$ and $\Sigma'$, and
we denote by  $\mathbf \Sigma$ the pair $(\Sigma, \Sigma')$, that we call
pair of cuts.  
The orientability of $\cut$ 
gives a unit normal vector field on $\cut\setminus\Hole$\,---\,hence, in particular,
a direction to
follow in order to ``enlarge'' the cut, separating its two ``faces''.
If we call  $O\subset \Om$ (resp. $I \subset \Om$)
the open region exterior (resp. interior) to $\cut \cup \cut'$,
we can explicitely describe the gluing procedure as follows:
\begin{itemize}
\item[]
we let 
$$
\chart:=\Om \setminus \cut, \qquad \chart':=
\Om \setminus \cut',
$$
and consider\footnote{In order 
to be consistent with the permutation $(1~ 2)$ mentioned above,
it is sufficient  to rename $(D',3)$ and $(D',4)$ as $(D',1)$ and $(D',2)$.} 
$$
\unionedisgiunta:=(\chart,1)\cup (\chart, 2)\cup (\chart', 3)\cup (\chart', 4);
$$
\item[] we endow $\unionedisgiunta$ 
with the following  equivalence relation:
given $x, x' \in \mfd$ and
$j \in \{1,2\}$, $j' \in \{3,4\}$, 
$(x,j), (x',j')\in \mathcal X$, we say that 
$(x,j)$ is equivalent to $(x',j')$ if and only if
$x=x'$, and  one of the following
 conditions hold:

\begin{equation}\label{eq:equivalenza}
\begin{cases}
& x\in O, \qquad 
\{j,j'\}\in \big\{\{1,3\}, \{2,4\}\big\}, 
\\
& x\in I, \qquad 
~ \{j,j'\}\in \big\{\{1,4\}, \{2,3\}\big\}.
\end{cases}
\end{equation}
\end{itemize}
We call $\cvv$ the quotient space of $\unionedisgiunta$ by this equivalence relation (endowed with the quotient topology) and 
$\widetilde{\pi} : \unionedisgiunta \to \cvv$ the projection.
The double cover of $\mfd$ is then
\begin{equation}\label{eq:proiezionesullabase}
\pi_{\cutpair,\mfd}
\colon \cvv \to \mfd
\end{equation}
where $\pi_{\cutpair,M}(\widetilde{\pi}(x,j)):=x$ for any $(x,j)\in\unionedisgiunta$,
which is well defined, since if $(x,j)\sim(x',j')$, then
$\pi_{\cutpair,\mfd}(\widetilde{\pi}(x,j))=\pi_{\cutpair,\mfd}(\widetilde{\pi}(x',j'))$.
If we set $\pi\colon (x,j)\in \unionedisgiunta\mapsto x \in \mfd$,
we have the
following commutative diagram:
\begin{equation}\label{eq:schema}
\xymatrix{
\unionedisgiunta \arr[r]^{\widetilde{\pi}} \arr[dr]_{\pi}  & \cvv \arr[d]^{\pi_{\cutpair,M}} \\
& \mfd
}
\end{equation}
The quotient $\cvv$ 
 admits a natural
structure of differentiable manifold, with four
local parametrizations given by
$\Psi_1, \Psi_2, \Psi_3, \Psi_4$, where
\begin{equation}\label{eq:Psij}
\begin{aligned}
& \Psi_j \colon \chart \to \widetilde{\pi} \big((\chart,j)\big), \qquad
\Psi_j:=\widetilde{\pi}\circ \pi_{\vert_{(\chart,j)}}^{~~-1}, \quad j =1,2,
\\
& \Psi_{j'} \colon \chart' \to\widetilde{\pi} \big((\chart',j')\big), \qquad
\Psi_{j'}:=\widetilde{\pi}\circ \pi_{\vert_{(\chart',j')}}^{~~-1}, \quad j' =
3,4.
\end{aligned}
\end{equation}
It is important  here that the transition maps are the identity: 
$$\Psi_{j'}^{-1}\circ \Psi_j = \mathrm{id}=\Psi_j^{-1} \circ \Psi_{j'},
\qquad 
j\in\{1,2\}, j'\in \{3,4\},
$$
the equalities being valid where all members of the equation are defined.
Notice that 
$\Psi_1(D) \cup \Psi_2(D) = \cvv \setminus \pi_{\cutpair,M}^{\ -1}(\cut
\setminus S)$, and
$\Psi_3(D') \cup \Psi_4(D') = 
\cvv \setminus \pi_{\cutpair,M}^{\ -1}(\cut'
\setminus S)$.

The local parametrizations allow to read a function $u : \cvv \to \R$
 in charts:
for $j=1,2$ and  $j'=3,4$
we let
$\uv_j(u): D \to \R$,  $\uv_{j'}(u): D'\to \R$
 be 
\begin{equation}\label{eq:v}
\uv_j(u):=u\circ \Psi_j, \qquad
\uv_{j'}(u):=u\circ \Psi_{j'}.
\end{equation}
Recalling \eqref{eq:equivalenza}, we have
\begin{equation}\label{eq:vvvv}
\begin{split}
v_1(u)=v_3(u) , \quad & v_2(u)=v_4(u) \qquad{\rm a.e.~in~} O,\\
v_1(u)=v_4(u), \quad & v_2(u)=v_3(u) \qquad {\rm a.e.~in~} I.
\end{split}
\end{equation}

%

\subsection{Total variation on the double cover}
The set $\cvv$ 
 is endowed with
the push-forward $\mu$ 
of the $n$-dimensional Lebesgue measure $\mathcal L^n$ in $\mfd$
via the local parametrizations.
We set $L^1(\cvv) := L^1_\mu(\cvv)$.

We say that $u$  is in $BV(\cvv)$ if
its distributional
gradient
$D u \colon \phi\in C^1_c(\cvv) \mapsto -\int_{\cvv} u D \phi \,d\mu \in \R^n$
is a bounded vector\,--\,valued Radon measure on $\cvv$. We denote by $|Du|$ the
\emph{total variation} measure of $Du$.

Let $u \in BV(\cvv)$ and $E\subseteq \cvv$ be a Borel set;
$E$ can be written as the union
of the following four disjoint Borel sets:
\begin{equation}
\label{eq:splitting}
E \cap \pitilde((D,1)), \;
E \cap \pitilde((D,2)), \;
E \cap \pitilde((\cut\setminus S,3)), \;
E \cap \pitilde((\cut\setminus S,4)),
\end{equation}
and we have 
\begin{equation}\label{eq:vartotnuova}
\begin{aligned}
\vert Du\vert(E) =
&
\sum_{j=1,2} \vert D\uv_j(u)\vert \Big(\pi_{\cutpair,M}\big(E \cap \pitilde((D,j))\big)\Big)\\
& +
\sum_{j' =3,4} \vert D\uv_{j'}(u)\vert \Big(\pi_{\cutpair,M}\big(E \cap \pitilde((\cut\setminus S,j'))\big)\Big).
\end{aligned}
\end{equation}
Notice that $\cut'$ does not appear in \eqref{eq:splitting}.
Choosing $\chart'$ in place
of $\chart$ amounts in
considering $\cut'$ in place of $\cut$ and
does
not change the subsequent discussion.

\begin{Example}\label{exa:dischi}\rm
Suppose the simplest case $n=2$, and $S$ two distinct points $q_1, q_2$.
Let $u \in  BV(\cvv)$ be such that
$v_1(u)$ is equal to
$a \in \R$
inside a  disk $B$ of radius $r>0$ contained in $I$ (or in $O$)
and $b \in \R$
outside, and
 $v_2(u)$ is equal to
$c \in \R$ in $B$
and $d \in \R$
outside.
Then, owing to \eqref{eq:vvvv},
\begin{equation}\label{eq:esempiostupido}
\begin{split}
\vert Du\vert(\cvv) = &
\vert Dv_1(u)\vert(B \cap \chart) + \vert Dv_2(u)\vert(B \cap \chart)
\\
& +  \vert Dv_3(u)\vert (\cut \setminus \{q_1, q_2\})
+  \vert Dv_4(u)\vert (\cut\setminus \{q_1, q_2\})\\
=& (\vert b-a\vert + \vert d - c\vert) ~2 \pi r + 2
\mathcal H^1(\cut)
 \vert d-b \vert.
\end{split}
\end{equation}
On the other hand, if $B$
is centered at a point of $\cut$, and $B\cap
\cut' = \emptyset$, then
\begin{equation}\label{eq:esempio}
\begin{aligned}
\vert Du\vert(\cvv) = &
\vert Dv_1(u)\vert(B \cap \chart) + \vert Dv_2(u)\vert(B \cap \chart) \\
&
+ \vert Dv_3(u)\vert(\cut\setminus \{q_1, q_2\}) + \vert Dv_4(u) \vert(\cut\setminus \{q_1,q_2\})
\\
= &
\left(\vert b-a\vert + \vert d - c\vert\right) ~2 \pi r
+ 2 \vert c-a\vert \mathcal H^1(\cut \cap B) \\
& + 2 \vert d - b\vert \left(
\mathcal H^1(\cut)-\mathcal H^1(\cut \cap B)\right) .
\end{aligned}
\end{equation}
If in particular $a=1$, $b=0$, $c=0$, $d=1$,
we have that 
 \eqref{eq:esempiostupido} and  \eqref{eq:esempio}
become
\begin{equation*}\label{eq:utile}
\vert Du\vert(\cvv) = 2\left(2 \pi r + \mathcal H^1(\cut)\right).
\end{equation*}
\end{Example}

\subsection{The constrained minimum problem on the double cover}\label{subsec:setting}
We let
$$
BV(\cvv; \{0, 1\}) := \Big\{ u \in BV(\cvv) \; : \;   u(y) \in \{0,1\} {\rm ~for}~\mu~{\rm a.e.}~ y \in \cvv\Big\}.
$$
The domain of $\mathcal F$ is defined\footnote{For simplicity
we drop the dependence on $\cutpair$ in the notation.} by 
\begin{equation*}
\label{eq:vincolo_gen}
\BVc:=\Big\{u\in BV(\cvv; \{0, 1\}) \;  :  
\sum_{\pi_{\cutpair,M}(y)=x} u(y)=1 \text{~ for a.e.~$x$ in~} \mfd \Big\},
\end{equation*}
and 
$$
\mathcal F(u) := \vert Du\vert (\cvv), \qquad u \in \domain(\FFF).
$$

Therefore  the values
of $u$ on the two points of a fiber are $0$ and $1$: this is what we call
the {\it constraint on the fibers}.
Hence, for any $u \in \BVc$ we have
\begin{equation}
\label{eq:v1v2}
\uv_1(u)=1-\uv_2(u) \; \text{a.e.~in}~ \chart,
\qquad v_3(u)=1-v_4(u) \; \text{a.e.~in}~\chart'.
\end{equation}
For this reason, in formulas \eqref{eq:saltou} and \eqref{eq:formulafinale} below the functions
$v_2(u)$ and $v_4(u)$ are not present. Moreover, the following splitting 
formula holds:
\begin{equation}\label{eq:saltou}
\pi_{\cutpair,M}(J_u) = \Big( J_{\uv_1(u)} \setminus (\CutsmenoS) \Big) \cup
\Big( J_{v_3(u)}\cap (\Sigma \setminus S) \Big).
\end{equation}
Indeed, as in \eqref{eq:splitting}, let us split $J_u$ as the union of the following four disjoint sets:
\begin{equation}\label{eq:splitjump}
 J_u \cap \pitilde((\chart,1)) , \;  J_u \cap \pitilde((\chart,2)),
\;  J_u \cap \pitilde((\CutsmenoS,3)) , \;  J_u \cap \pitilde((\CutsmenoS,4)) .
\end{equation}
By the constraint on the fibers, to each point in the first
set of \eqref{eq:splitjump} there corresponds a unique point
in the second set, belonging to the same fiber, and viceversa.
A similar correspondence
holds between the third and the fourth set. Hence
\begin{equation*}\label{eq:splitjump2}
\pi_{\cutpair,M}(J_u) = \pi_{\cutpair,M}\Big( J_u \cap \pitilde((\chart,1)) \Big)
\cup \pi_{\cutpair,M}\Big( J_u \cap \pitilde((\CutsmenoS,3)) \Big).
\end{equation*}
By the definitions of $J_u$, $J_{v_1(u)}$ and  $J_{v_3(u)}$,
using also the local parametrizations $\Psi_1$,
$\Psi_3$, it follows  that
$\pi_{\cutpair,M}\big( J_u \cap \pitilde((\chart,1)) \big)= J_{\uv_1(u)}
\setminus (\CutsmenoS)$,
and $\pi_{\cutpair,M}\big( J_u \cap \pitilde((\CutsmenoS,3))\big)= J_{\uv_3(u)}
 \cap (\CutsmenoS)$, and
 \eqref{eq:saltou} follows.

\begin{definition}[\textbf{Constrained lifting}]\label{rem:uspeciale}
Let $v\in BV(\chart; \{0,  1\})$. Then
the function
\begin{equation}
\label{eq:usigma}
u:= \begin{cases}
v &  \text{in } \Psi_1(\chart),\\
1-v &  \text{in } \Psi_2(\chart),
\end{cases}
\end{equation}
is in $\BVc$, and $\uv_1(u)=v$. We call $u$
the constrained lifting of $v$.
\end{definition}
In particular,  when $v$ is identically equal to $1$ (or $0$),
we have
$$
\pi_{\cutpair,M}(J_u)= \CutsmenoS.
$$
The next result clarifies which is the notion of area we intend to minimize.

\begin{Proposition}
Let $u\in \BVc$. Then
\begin{equation}
\label{eq:formulafinale}
\begin{split}
|D u| (\cvv)
=&
2 \Big( \hn( J_{\uv_1(u)} \setminus \cut) + \hn(J_{v_3(u)} \cap \cut) 
\Big)
\\
=& 2 \,\hn(\pi_{\cutpair,M} (J_u)).
\end{split}
\end{equation}
\end{Proposition}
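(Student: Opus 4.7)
I would deduce the formula directly from the splitting identity \eqref{eq:vartotnuova} applied to $E=\cvv$, combined with the fiber constraint \eqref{eq:v1v2} and the classical representation of the total variation for $\{0,1\}$-valued BV functions.

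First, specialize \eqref{eq:vartotnuova} to $E=\cvv$. The four pieces of the partition \eqref{eq:splitting} project under $\pi_{\cutpair,M}$ onto $D$, $D$, $\cut\setminus S$, $\cut\setminus S$ respectively, so
\[
|Du|(\cvv)=\sum_{j=1,2}|Dv_j(u)|(D)\;+\;\sum_{j'=3,4}|Dv_{j'}(u)|(\cut\setminus S).
\]
Next, by the constraint \eqref{eq:v1v2}, $v_2(u)=1-v_1(u)$ on $D$ and $v_4(u)=1-v_3(u)$ on $D'$, hence $Dv_2(u)=-Dv_1(u)$ and $Dv_4(u)=-Dv_3(u)$ as vector-valued Radon measures on the respective charts. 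Therefore $|Dv_2(u)|=|Dv_1(u)|$ on $D$ and $|Dv_4(u)|=|Dv_3(u)|$ on $D'\supset \cut\setminus S$, yielding
\[
|Du|(\cvv)=2\,|Dv_1(u)|(D)+2\,|Dv_3(u)|(\cut\setminus S).
\]

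Now, since $u$ takes values in $\{0,1\}$, each $v_j(u)$ is a characteristic function in $BV$, so by the standard coarea/De Giorgi representation $|Dv_j(u)|=\hn\llcorner J_{v_j(u)}$. Thus
\[
|Dv_1(u)|(D)=\hn(J_{v_1(u)}\cap D)=\hn(J_{v_1(u)}\setminus\cut),
\]
and, using that $S$ has dimension $n-2$ hence $\hn(S)=0$,
\[
|Dv_3(u)|(\cut\setminus S)=\hn(J_{v_3(u)}\cap(\cut\setminus S))=\hn(J_{v_3(u)}\cap\cut).
\]
This proves the first equality.

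For the second, I would invoke the jump-splitting \eqref{eq:saltou}: the two pieces $J_{v_1(u)}\setminus(\cut\setminus S)$ and $J_{v_3(u)}\cap(\cut\setminus S)$ are disjoint (one lies in $D$, the other in $\cut\setminus S$), so their $\hn$-measures add; removing and reinserting the negligible set $S$ gives
\[
\hn(\pi_{\cutpair,M}(J_u))=\hn(J_{v_1(u)}\setminus\cut)+\hn(J_{v_3(u)}\cap\cut).
\]
The only technicalities that require care are (i) making sure the factor $2$ arises cleanly from the fiber constraint rather than being double counting, and (ii) handling $S$ as $\hn$-null; neither is really an obstacle. The computation is then concluded by comparing with the first displayed identity.
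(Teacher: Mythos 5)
Your proposal is correct and follows essentially the same path as the paper's proof: specialize \eqref{eq:vartotnuova} to $E=\cvv$, use the fiber constraint \eqref{eq:v1v2} to pair up $v_1$ with $v_2$ and $v_3$ with $v_4$ (yielding the factor $2$), invoke the coarea/De Giorgi representation of $|Dv_j(u)|$ for $\{0,1\}$-valued BV functions, and then obtain the second equality from the jump-splitting formula \eqref{eq:saltou}. The only difference is that you spell out a few routine points (negligibility of $S$, disjointness of the two pieces) that the paper leaves implicit.
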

\begin{proof}
Recall the splitting in \eqref{eq:vartotnuova}, with the choice $E:=\cvv$.
By \eqref{eq:v1v2}, we have
\begin{equation}
\label{eq:contributo1}
|D \uv_1(u) | (\chart)=|D \uv_2(u) | (\chart), \qquad
|D \uv_3(u) | (\cut)=|D \uv_4(u) | (\cut).
\end{equation}
By the properties of BV functions we have
\begin{equation}
\label{eq:contributo2}
|D \uv_1(u) | (\chart) =  \hn( J_{\uv_1(u)} \setminus \cut), \quad
|D \uw_3(u)|(\cut)=
 \hn (J_{\uv_3(u)} \cap \cut).
\end{equation}
Substituting \eqref{eq:contributo2} into
\eqref{eq:vartotnuova}, and
recalling \eqref{eq:contributo1},
we get the first equality in \eqref{eq:formulafinale}.
The second equality  is now a consequence of \eqref{eq:saltou}.
\end{proof}

\begin{remark}\label{rem:fattore4}\rm
The factor $2$ in \eqref{eq:formulafinale} is obtained by
multiplying  the absolute value
of  the difference of the values of $u$
(which gives a factor $1$),
with the number of the sheets
(which gives a factor $2$).

\end{remark}
A particular case of a result proven in \cite{AmBePa:17} is 
the following. 
\begin{theorem}[\textbf{Existence of minimizers}]\label{teo:esistenza}
We have \begin{equation}
\label{eq:problema_gen}
\inf\Big\{|D u|\big(\cvv \big) \; : \; u\in \BVc \Big \} 
=
 \min\Big\{|D u|\big(\cvv \big) \; : \; u\in \BVc \Big \} >0.
\end{equation}
\end{theorem}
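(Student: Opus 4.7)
My plan is to apply the direct method of the calculus of variations for existence, and then prove the strict inequality by a connectedness argument for the cover. First, to show that $\BVc$ is nonempty and the infimum is finite, I take the constrained lifting (from the definition immediately above) of the constant function $v\equiv 1$; the observation right after that definition gives $\pi_{\cutpair,\mfd}(J_u)=\CutsmenoS$, and then formula \eqref{eq:formulafinale} yields $|Du|(\cvv)=2\hn(\cut)<\infty$, since $\cut$ is a compact smooth $(n-1)$-submanifold.

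Next, I pick a minimizing sequence $\{u_k\}\subset\BVc$. Since each $u_k$ is $\{0,1\}$-valued and $\mu(\cvv)=2\,\mathcal L^n(\Om)<\infty$, the $L^1(\cvv)$ norms are uniformly bounded, and together with the bound on $|Du_k|(\cvv)$ this gives a uniform $BV(\cvv)$ bound. Reading $u_k$ in the charts $\Psi_1,\Psi_2\colon\chart\to\cvv$ and $\Psi_3,\Psi_4\colon\chart'\to\cvv$, the standard $BV\hookrightarrow L^1$ compactness on bounded open subsets of $\R^n$ produces, along a common subsequence, $L^1$-limits of the $v_j(u_k)$; because the transition maps $\Psi_j^{-1}\circ\Psi_{j'}$ are the identity, these local limits glue to a function $u_\infty\in BV(\cvv)$ with $u_k\to u_\infty$ in $L^1(\cvv)$ and $\mu$-a.e. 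The pointwise a.e.\ convergence preserves the $\{0,1\}$ condition and, reading simultaneously at the two preimages of a generic $x\in\mfd$, the fiber constraint $\sum_{\pi_{\cutpair,\mfd}(y)=x}u(y)=1$; hence $u_\infty\in\BVc$. Chart-wise lower semicontinuity of the total variation then gives $|Du_\infty|(\cvv)\le\liminf_k|Du_k|(\cvv)$, so $u_\infty$ attains the infimum.

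For strict positivity, suppose by contradiction that $|Du|(\cvv)=0$ for some $u\in\BVc$. Then $u$ is a.e.\ constant on each connected component of $\cvv$. However, $\cvv$ is connected: the cut-and-paste construction assigns to $\cut$ the nontrivial transposition $(1~2)$, and since $\partial\cut=\Hole\ne\emptyset$, a small loop in $\mfd$ linking $\Hole$ transversely crosses $\cut$ exactly once and therefore lifts to a path joining sheet $1$ to sheet $2$. Consequently $u$ equals a single constant $c\in\{0,1\}$ $\mu$-a.e., but the fiber constraint then forces $2c=1$, a contradiction. The main technical point requiring some care is the transfer of BV compactness and lower semicontinuity from Euclidean open sets to the glued manifold $\cvv$; this is unproblematic precisely because the charts $\Psi_j$ have identity transition maps, so the overall strategy reduces to the more general argument carried out in \cite{AmBePa:17}.
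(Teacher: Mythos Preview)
Your argument is correct and follows essentially the same route as the paper: the paper states existence as a particular case of \cite{AmBePa:17} (which is precisely the direct-method argument you spell out via chart-wise BV compactness and lower semicontinuity), and deduces strict positivity from \eqref{eq:constancy2} of Lemma~\ref{lem:constancy} with $A=\Om$, whose proof is exactly your connectedness-plus-constraint contradiction. The only minor logical difference is that the paper invokes the uniform lower bound \eqref{eq:constancy2} to get positivity of the infimum directly, whereas you first attain the minimum and then apply the pointwise statement (equivalent to \eqref{eq:constancy}) to the minimizer; both arrive at the same conclusion with the same ingredients.
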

Positivity 
follows
from \eqref{eq:constancy2} below, with the choice
$A:=\Om$.
We
denote by $u_{\min}$ a minimizer of problem \eqref{eq:problema_gen}.

\begin{lemma}\label{lem:constancy}
Let $A\subseteq \Om$
be a nonempty open set such that $\pi_{\cutpair,M}^{\ -1}(A\setminus S)$ is connected.
Then for any $u\in\BVc$,
\begin{equation}
\label{eq:constancy}
\hn\big(A \cap \pi_{\cutpair,M}(J_u) \big)>0.
\end{equation}
Moreover, if $A$ is bounded with Lipschitz boundary,
then
\begin{equation}
\label{eq:constancy2}
\inf \big\{ \hn\big(A \cap \pi_{\cutpair,M}(J_u)  \big) \; : \; u\in\BVc \big\} >0.
\end{equation}
\end{lemma}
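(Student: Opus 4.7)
The plan is to prove the two assertions in sequence: first the pointwise strict positivity \eqref{eq:constancy}, and then upgrade it to the uniform lower bound \eqref{eq:constancy2} via a compactness-and-contradiction argument.

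For \eqref{eq:constancy}, I argue by contradiction. Suppose $\hn\big(A\cap \pi_{\cutpair,M}(J_u)\big)=0$ for some $u\in\BVc$. Applying the splitting \eqref{eq:vartotnuova} with $E:=\pi_{\cutpair,M}^{\ -1}(A\setminus S)$, together with \eqref{eq:formulafinale}, one obtains $|Du|\big(\pi_{\cutpair,M}^{\ -1}(A\setminus S)\big)=0$. Since $\pi_{\cutpair,M}^{\ -1}(A\setminus S)$ is by hypothesis a connected open subset of the differentiable manifold $\cvv$, a BV function with vanishing total variation there must be $\mu$-a.e.\ equal to a single constant $c\in\{0,1\}$. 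This contradicts the fiber constraint: since $S$ is two-codimensional, $A\setminus S$ has positive Lebesgue measure, and for a.e.\ $x$ in it both points of $\pi_{\cutpair,M}^{\ -1}(x)$ belong to $\pi_{\cutpair,M}^{\ -1}(A\setminus S)$, so $\sum_{\pi_{\cutpair,M}(y)=x} u(y)=2c\in\{0,2\}$, never equal to $1$.

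For \eqref{eq:constancy2}, I argue again by contradiction, now using BV compactness. Suppose there exists a sequence $\{u_k\}\subset\BVc$ with $\hn\big(A\cap \pi_{\cutpair,M}(J_{u_k})\big)\to 0$; equivalently, by \eqref{eq:formulafinale}, $|Du_k|\big(\pi_{\cutpair,M}^{\ -1}(A\setminus S)\big)\to 0$. Reading $u_k$ through the local parametrizations $\Psi_1,\Psi_2,\Psi_3,\Psi_4$ of \eqref{eq:Psij}, the traces $v_j(u_k)$ are uniformly bounded in $L^1$ (as $A$ is bounded and values lie in $\{0,1\}$) and have uniformly bounded total variation on $A$, resp.\ $A\cap\cut$. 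Since $\partial A$ is Lipschitz, standard BV compactness yields a subsequence converging in $L^1(\pi_{\cutpair,M}^{\ -1}(A))$ to some limit $u_\infty$. The pointwise conditions $u_\infty\in\{0,1\}$ and $\sum_{\pi_{\cutpair,M}(y)=x} u_\infty(y)=1$ a.e.\ both pass to the $L^1$ limit; extending $u_\infty$ outside $\pi_{\cutpair,M}^{\ -1}(A)$ by a fixed element of $\BVc$ (for instance a constrained lifting in the sense of Definition~\ref{rem:uspeciale}) produces a competitor $\widehat u\in\BVc$. By lower semicontinuity of the total variation on the open set $\pi_{\cutpair,M}^{\ -1}(A\setminus S)$, $|D\widehat u|\big(\pi_{\cutpair,M}^{\ -1}(A\setminus S)\big)=0$, which contradicts \eqref{eq:constancy} applied to $\widehat u$.

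The main obstacle I anticipate is the compactness step in the second part: one must check that the $L^1$-limit lies in $\BVc$ (not just satisfies the binary constraint on $\pi_{\cutpair,M}^{\ -1}(A)$), and that the extension outside $\pi_{\cutpair,M}^{\ -1}(A)$ can be arranged without affecting the key quantity $|D\widehat u|\big(\pi_{\cutpair,M}^{\ -1}(A\setminus S)\big)$ whose vanishing is the crux of the contradiction. Modulo this verification, both assertions reduce to the same topological fact: connectedness of the cover over $A\setminus S$, together with the fiber constraint, forbids locally constant configurations.
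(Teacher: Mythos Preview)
Your argument for \eqref{eq:constancy} is essentially identical to the paper's: contradiction, localize the total variation via \eqref{eq:vartotnuova}, conclude constancy on the connected preimage, contradict the fiber constraint.

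For \eqref{eq:constancy2} you share the paper's compactness-and-contradiction strategy, but you take an unnecessary detour. The paper never extends the $L^1$-limit to a global element of $\BVc$; it simply restricts each $u_k$ to $\pi_{\cutpair,M}^{\ -1}(A\setminus S)$, observes that these restrictions lie in the local constrained class $BV_{\mathrm{constr}}(\pi_{\cutpair,M}^{\ -1}(A\setminus S);\{0,1\})$, applies BV compactness there, and obtains a limit $\hat u$ in that same local class with $|D\hat u|(\pi_{\cutpair,M}^{\ -1}(A\setminus S))=0$ by lower semicontinuity. Connectedness then forces $\hat u$ to be constant, contradicting the local fiber constraint directly---no extension, no appeal back to \eqref{eq:constancy}. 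Your approach of gluing $u_\infty$ to a constrained lifting outside $\pi_{\cutpair,M}^{\ -1}(A)$ does work (the Lipschitz boundary ensures the glued function is BV, and the open set $\pi_{\cutpair,M}^{\ -1}(A\setminus S)$ does not see the gluing interface), but the very ``obstacle'' you flag---verifying $\widehat u\in\BVc$---is entirely self-imposed and disappears once you stay local.
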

\begin{proof}
By contradiction,  suppose that
\begin{equation}
\label{eq:constancycontr}
\hn\big(A \cap \pi_{\cutpair,M}(J_u) \big)=0.
\end{equation}
Applying \eqref{eq:saltou} to
\eqref{eq:constancycontr}, we get
\begin{equation}\label{eq:contaccio2}
0 =
\hn(A  \cap (J_{v_1(u)} \setminus \Sigma))
+
\hn(A  \cap J_{v_3(u)} \cap  \Sigma).
\end{equation}
Now, set $A^S:=A\setminus S$.
Applying \eqref{eq:vartotnuova} with the choice
$E:=\pi_{\cutpair,M}^{-1}(A^S)$,
we get
\begin{equation}\label{eq:contaccio}
\begin{aligned}
\vert Du\vert(\pi_{\cutpair,M}^{\ -1}(A^S))
= &
2
\vert D v_1(u)\vert
\left(
\pi_{\cutpair,M}(\pi_{\cutpair,M}^{\ -1}(A^S) \cap\widetilde{\pi}((D,1))
)\right)
\\
&
+
2
\vert D v_3(u)\vert
\left(
\pi_{\cutpair,M}(\pi_{\cutpair,M}^{\ -1}(A^S) \cap \widetilde{\pi}(\Sigma \setminus S,3))
)\right)
\\
= &
2 \, \big(
\vert D v_1(u)\vert
\left(
A^S \setminus \Sigma
\right)
+
\vert D v_3(u)\vert
\left(
A^S \cap \Sigma
\right)
\big)\\
= & 2 \, \big(  \hn(A\cap (J_{v_1(u)} \setminus \cut)) + \hn(A\cap J_{v_3(u)} \cap \cut) \big),
\end{aligned}
\end{equation}
which, coupled with \eqref{eq:contaccio2}, implies
$|D u|(\pi_{\cutpair,M}^{\ -1}(A^S))=0$. Then
$u$ is constant on $\pi_{\cutpair,M}^{\ -1}(A^S)$, which contradicts the
validity of the constraint on the fibers.
This proves \eqref{eq:constancy}.

Now, let us  suppose,
still by contradiction, that there exists a sequence $(u_k)_k\subset \BVc$
such that $\lim_{k\to +\infty} \hn\big(A \cap \pi_{\cutpair,M}(J_{u_k})  \big) =0$. Thanks
to the assumption on $A$, $\pi_{\cutpair,M}^{\ -1}(A^S)$ is a double 
nontrivial cover of $A^S$.
In particular,
for each $k\in\mathbb N$, the restriction $\hat u_k:= {u_k}_{|_{\pi_{\cutpair,M}^{\ -1}(A^S)}}$ is
in $BV(\pi_{\cutpair,M}^{\ -1}(A^S); \{0,1 \})$ and satisfies the constraint on the 
fibers, and reasoning as above,
$|D {\hat u_k} |(\pi_{\cutpair,M}^{\ -1}(A^S))= 2 \hn(A \cap \pi_{\cutpair,M}(J_{u_k}))$.
By compactness, up to a not relabelled subsequence, 
there exists $\hat u\in BV_{\mathrm{constr}}(\pi_{\cutpair,M}^{\ -1}(A^S); \{0, 1 \})$ such
that $\hat u_k\to u$ in $L^1(\pi_{\cutpair,M}^{\ -1}(A^S))$, and  by lower semicontinuity,
\[
|D \hat u|(\pi_{\cutpair,M}^{\ -1}(A^S)) \leq  \liminf_{k\to+\infty} |D {\hat u_k} |(\pi_{\cutpair,M}^{\ -1}(A^S))
= 2 \lim_{k\to +\infty} \hn\big(A \cap \pi_{\cutpair,M}(J_{u_k}) \big) =0.
\]
Hence $\hat u$ is constant on $\pi_{\cutpair,M}^{\ -1}(A^S)$, contradicting the 
constraint on the fibers.
\end{proof}
Lemma \ref{lem:constancy} shows, in particular, that 
the nontrivial topology of the cover coupled with the
constraint on the fibers forces $u$ to jump in suitable
open sets.
As a further consequence of Lemma \ref{lem:constancy},
the boundary datum $S$ is attained 
 by any constrained function on the cover, in the 
following sense.
\begin{corollary}\label{cor:Sbordo}
Let $u\in\BVc$.
Then
\begin{equation}\label{eq:inclusione}
\overline{\pi_{\cutpair,M}(J_u)}\setminus \pi_{\cutpair,M}(J_u)
\supseteq S.
\end{equation}
\end{corollary}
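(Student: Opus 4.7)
The plan is to reduce the claim to Lemma~\ref{lem:constancy} applied to arbitrarily small balls centered at points of $S$. First observe the easy half of \eqref{eq:inclusione}: since $\pi_{\cutpair,M}(J_u)\subseteq \mfd=\Om\setminus S$, the set $\pi_{\cutpair,M}(J_u)$ is automatically disjoint from $S$, so it suffices to prove the inclusion $S\subseteq \overline{\pi_{\cutpair,M}(J_u)}$.

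Fix $x\in S$ and a radius $r>0$ small enough so that $A:=B_r(x)\subset \Om$. I would like to invoke Lemma~\ref{lem:constancy} on this $A$, and for that the key point is to verify that $\pi_{\cutpair,M}^{\ -1}(A\setminus S)$ is connected. Since $S$ is a smooth two-codimensional embedded submanifold, for sufficiently small $r$ the open set $A\setminus S$ is homotopy equivalent to a small meridian circle around $S$ at $x$, so $\pi_1(A\setminus S)\cong \Z$ is generated by such a meridian $\mu$. By the very construction of the double cover (the cut $\cut$ has $\partial\cut=S$ and sheets are swapped across $\cut$), the monodromy of $\mu$ is the nontrivial permutation $(1~2)$. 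Hence the restriction $\pi_{\cutpair,M}^{\ -1}(A\setminus S)\to A\setminus S$ is the connected nontrivial double cover of $A\setminus S$.

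Having verified the connectedness hypothesis, Lemma~\ref{lem:constancy} yields $\hn\big(A\cap \pi_{\cutpair,M}(J_u)\big)>0$, and in particular $B_r(x)\cap \pi_{\cutpair,M}(J_u)\neq \emptyset$. Since $r>0$ was arbitrary, $x\in \overline{\pi_{\cutpair,M}(J_u)}$, which proves \eqref{eq:inclusione}.

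The only step requiring any real care is the connectedness of $\pi_{\cutpair,M}^{\ -1}(A\setminus S)$; this is the topological heart of the argument and depends on the smoothness and the codimension-two assumption on $S$ together with the explicit permutation $(1~2)$ attached to $\cut$ in the cut-and-paste construction. Once this is in place, the rest is a direct application of Lemma~\ref{lem:constancy}.
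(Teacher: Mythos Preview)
Your proof is correct and follows essentially the same route as the paper: both reduce the claim to Lemma~\ref{lem:constancy} applied to a small ball centered at a point of $S$, the paper phrasing it as a contradiction argument while you argue directly. You are in fact slightly more careful than the paper in that you explicitly verify the connectedness hypothesis $\pi_{\cutpair,M}^{\ -1}(A\setminus S)$ connected via the monodromy of a meridian, whereas the paper leaves this implicit.
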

\begin{proof}
The relation $S\cap \pi_{\cutpair,M}(J_u)=\emptyset$ is trivial, recall also \eqref{eq:saltou}.
Now,
suppose by contradiction that there exists a point
$p\in S\setminus \overline{\pi_{\cutpair,M}(J_u)}$.
Take an open ball $B$ centered at $p$, with $B\subset \Om\setminus \overline{\pi_{\cutpair,M}(J_u)}$,
and apply Lemma \ref{lem:constancy} with the choice $A:=B$. Then,
since $A \cap \pi_{\cutpair,M}(J_u)=\emptyset$, we end up with a contradiction
with \eqref{eq:constancy}.
\end{proof}
If $2 \leq n \leq 7$ 
and $u$ is a minimizer, it is possible 
to show that 
equality holds in \eqref{eq:inclusione} \cite{AmBePa:17}. 

The definition of solution to the Plateau problem in the 
sense of double covers\footnote{An analog definition can be given for covers of degree larger than two.
} is as follows.

\begin{definition}[\textbf{Constrained double\,--\,cover solutions}]\label{def:soluzione}
We call
$$
\pi_{\cutpair,M}(J_{u_{\rm min}})
$$
a \emph{constrained double\,--\,cover solution} (in $\Om$) to Plateau's problem \emph{with boundary $\Hole$}.
\end{definition}

We say that 
a portion $P$ of $S$ is wetted if $\overline{\pi_{\cutpair,M}(J_{u_{\rm min}})}
\supseteq P$, see also Section \ref{sec:examples}.

\subsection{Independence of the pair of cuts}\label{sec:indipendenza}
In this section we show that  constrained double\,--\,cover solutions are independent of  admissible cuts.
A  different proof of such an independence
is given in Proposition \ref{prop:isometria}.

Let us recall the   definition of unoriented linking number,
see for instance
\cite[Section 3.17]{BoTu:82} or \cite[Section 5.2]{Hi:76}.
\begin{definition}\label{def:link_property}
Let $\rho \in C^1(\Sf^1;\R^n \setminus \Hole)$
be transverse to $\cut$.
The unoriented linking number between $\rho$ and $\Hole$ is defined as
\begin{equation}\label{eq:link}
\lkn_2(\rho;\Hole):=
\begin{cases}
0 & {\rm if} ~ \#(\rho^{-1}(\cut)) {\rm ~is~even},
\\
1 & {\rm if} ~ \#(\rho^{-1}(\cut)) {\rm ~is~odd}.
\end{cases}
\end{equation}
\end{definition}

The right hand side of \eqref{eq:link} turns out to be independent of
the cut $\cut$.
When $\rho$ is just continuous, the unoriented linking number is defined
using a $C^1$ loop homotopic to $\rho$ and not intersecting $\Hole$
\cite{Hi:76}.

\medskip

\begin{theorem}\label{teo:indipendenza}
Let $\cutpair=(\cut,\cut')$, $\newcutpair=(\Gamma,\Gamma')$ be two pairs of cuts.
Let $u\in
BV(Y_{\cutpair}; \{0,1\})$ satisfies the constraint on the fibers.
Then there exists 
$u'\in BV(Y_{\newcutpair}; \{0,1\})$ satisfying the constraint
on the fibers 
such that, up to a $\hn$\,--\,negligible set,
\begin{equation}\label{eq:saltiuguali}
\pi_{\cutpair,\mfd}(J_u)= \pi_{\newcutpair,\mfd}(J_{u'}).
\end{equation}
\end{theorem}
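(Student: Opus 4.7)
My plan is to construct $u'$ directly from $u$ by flipping values on a suitably chosen region $W\subset\Omega$ whose measure-theoretic boundary relates $\Sigma$ to $\Gamma$, and then to verify the identity of projected jump sets using formula \eqref{eq:saltou}.

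First I will produce an open set $W\subset\Omega$ of finite perimeter whose reduced boundary coincides, up to $\hn$-null sets, with $(\Sigma\cup\Gamma)\setminus(\Sigma\cap\Gamma)$. This uses the fact that $\Sigma-\Gamma$ (with orientations coming from the cuts) is an oriented $(n-1)$-cycle in the contractible ball $\Omega$ and hence bounds a chain; after a generic perturbation I may also assume $\hn(\Sigma\cap\Gamma)=0$. Writing $v:=v_1(u)$, viewed as a $\{0,1\}$-valued BV function on $\Omega$ (since $\Sigma$ has zero $\mathcal L^n$-measure), I set
\[
v':=\begin{cases} v & \text{on }\Omega\setminus W,\\ 1-v & \text{on }W,\end{cases}
\]
and define $u'\in BV(Y_{\newcutpair};\{0,1\})$ as the constrained lifting (Definition~\ref{rem:uspeciale}) of the restriction of $v'$ to $\Omega\setminus\Gamma$. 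By construction $v'_1(u')=v'$ a.e.\ and $u'$ satisfies the fiber constraint.

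To verify \eqref{eq:saltiuguali} I will split $\Omega$ as $(\Omega\setminus(\Sigma\cup\Gamma))\cup(\Sigma\setminus\Gamma)\cup(\Gamma\setminus\Sigma)$, modulo $\hn$-null sets, and inspect each piece. On the first piece $v'$ is locally either $v$ or $1-v$, so $J_{v'}=J_v$ there, yielding matching contributions $J_v\setminus\Sigma$ and $J_{v'}\setminus\Gamma$ in \eqref{eq:saltou} for $u$ and $u'$ respectively. On $\Sigma\setminus\Gamma$, which lies in $\partial^\ast W$, the flip forces $v'$ to jump at $x$ precisely when $v$ does not; by \eqref{eq:vvvv} this latter condition is equivalent to $v_3(u)$ jumping at $x$, so the contribution $J_{v_3(u)}\cap\Sigma$ for $u$ matches $J_{v'_1(u')}\setminus\Gamma$ for $u'$. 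The symmetric argument on $\Gamma\setminus\Sigma$ matches $J_{v_1(u)}\setminus\Sigma$ for $u$ with $J_{v'_3(u')}\cap\Gamma$ for $u'$.

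The main obstacle I expect is bookkeeping: at each trace point of $\Sigma\cup\Gamma$ one must carefully track which side of $W$ lies on which side of the cut, so that the parity identity relating ``$v$ jumps on the cut'' to ``$v_3(u)$ jumps'' (coming from \eqref{eq:vvvv}) feeds cleanly into the flipped function and into the constrained lifting for the new pair of cuts. Once this parity matching is verified, the three local identifications combine into the global equality \eqref{eq:saltiuguali}.
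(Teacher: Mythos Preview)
Your approach is essentially the paper's: define a ``flip region'' in $\Omega$, replace $v_1(u)$ by $1-v_1(u)$ there, take the constrained lifting with respect to the new cut, and verify \eqref{eq:saltiuguali} by splitting $\Omega$ according to $\Sigma$ and $\Gamma$. The paper builds the flip region by an elementary parity count rather than by current theory: fixing $x_0\notin\Sigma\cup\Gamma$, it sets $\mathcal Q=\{x:\#(\gamma_x^{-1}(\Sigma))+\#(\gamma_x^{-1}(\Gamma))\text{ is even}\}$ for a transverse path $\gamma_x$ from $x_0$ to $x$, which is well defined because $\mathrm{link}_2(\rho;S)$ is independent of the cut. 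Your $W$ (or its complement) is the same set, obtained instead from the fact that the cycle $\Sigma-\Gamma$ bounds in the ball. The parity description has the advantage of giving $\mathcal Q$ directly as an open union of components of $\Omega\setminus(\Sigma\cup\Gamma)$, with no appeal to structure theorems for integral currents.

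One point to tighten: the statement fixes $\newcutpair=(\Gamma,\Gamma')$, so you cannot simply perturb $\Gamma$ to arrange $\hn(\Sigma\cap\Gamma)=0$; the $u'$ you must produce lives on $Y_{\newcutpair}$ for the \emph{given} $\Gamma$. The paper handles this by keeping a fourth piece $\Sigma\cap\Gamma$ in the splitting and checking $J_{v'_3}\cap\Sigma\cap\Gamma=J_{v_3(u)}\cap\Sigma\cap\Gamma$ up to $\hn$-null sets. Your argument goes through once you add this piece; alternatively you could prove the transverse case first and reach a general $\Gamma$ in two steps through an intermediate cut $\Gamma''$ transverse to both $\Sigma$ and $\Gamma$, but that should be said explicitly.
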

\begin{proof}
Before giving the proof, we explain in a rough way the idea. 
First we fix a ``base point'' $x_0$ and count
the parity of 
the number of intersections with the various manifolds
$\Sigma, \Sigma', \Gamma, \Gamma'$.
Next, we construct $u'$ so that
 $u'$ coincides with $u$  when calculated on 
$(x,j)$ for $j=1,2$, provided
that the parity of the number of intersections with $\Sigma$ 
coincides with
the parity of the number of intersections with $\Gamma$, while
 $u'$ coincides with $1-u$  when calculated on 
$(x,j)$ for $j=1,2$, provided
that the parity of the number of intersections with $\Sigma$ 
differs with
the parity of the number of intersections with $\Gamma$. Similarly, 
 $u'$ coincides with $u$  when calculated on 
$(x,j')$ for $j'=3,4$, provided
that the parity of the number of intersections with $\Sigma'$ 
coincides with
the parity of the number of intersections with $\Gamma'$, while
 $u'$ coincides with $1-u$  when calculated on 
$(x,j')$ for $j'=3,4$, provided
that the parity of the number of intersections with $\Sigma'$ 
differs with
the parity of the number of intersections with $\Gamma'$.

Let us now come to the proof.
Without loss of generality, we can suppose that $\cut \neq \Gamma$.
Fix $\polo \in \mfd\setminus (\cut\cup\Gamma)$.
Let $x\in \mfd \setminus (\cut\cup \Gamma)$, and let $\gamma_x\in C^1([0,1];\mfd)$ be such that
$\ga_x(0)=\polo$, $\ga_x(1)=x$, and $\ga_x$ is transverse both to $\cut$ and to $\Gamma$; 
such a $\ga_x$ will be called an admissible path from $x_0$ to $x$.
We set
$$ 
{\mathit h}(\ga_x; \cut, \Gamma):= \#(\ga_x^{-1}(\cut)) + \#(\ga_x^{-1}(\Gamma)).
$$
If we consider
 another admissible path $\la_x$
from $x_0$ to $x$, we have 
that 
 $h(\ga_x; \cut, \Gamma)$ and  $h(\la_x; \cut,\Gamma)$
have the same parity.
Indeed,
let $\rho$ be the closed curve going from $\polo$ to $x$ following
 $\ga_x$, and
then backward from $x$ to $\polo$ along  $\la_x$.
Recalling that ${\rm link}_2(\rho; \cut) 
= {\rm link}_2(\rho; \Gamma)$, it follows that 
${\mathit h}(\ga_x;  \cut,\Gamma) + {\mathit h}(\la_x; \cut,\Gamma) 
=  \#(\rho^{-1}(\cut)) + \#(\rho^{-1}(\Gamma))$ is even.
We  are then allowed to  set
\begin{equation}
\label{eq:i2}
h(x; \cut,\Gamma):= 
\begin{cases}
0 & {\rm if} 
~{\mathit h}(\ga_x; \cut,\Gamma) ~{\rm is~ even}, 
\\
1 & {\rm if} 
~{\mathit h}(\ga_x; \cut,\Gamma) ~{\rm is~ odd}, 
\end{cases}
\end{equation}
for any admissible $\ga_x$ from $x_0$ to $x$\footnote{Once
$\polo$ is fixed, the function
$h$ allows 
to define an ``exterior'' and an ``interior'' 
of  $\cut\cup\Gamma$, even when $\cut$ and $\Gamma$
intersect on a set of positive $\hn$\,--\,measure.}.

Set $\mathcal Q 
:=\{x\in \mfd \setminus (\cut \cup \Gamma) \; : \; h(x; \cut, \Gamma)=0\}$,
which is an open set, 
with $\dde \mathcal Q \subseteq \cut \cup \Gamma$;
moreover 
$\mathcal Q$ has finite perimeter in $\Omega$
by
\cite[Proposition 3.62]{AmFuPa:00}.
Define
\begin{equation*}
\label{eq:v1primo}
v_1':=\begin{cases}
v_1(u) \quad & \text{in }  \mathcal Q,
\\
1-v_1(u) \quad & \text{in } \Om \setminus \mathcal Q.
\end{cases}
\end{equation*}
{}From 
\cite[Theorem 3.84]{AmFuPa:00} it follows that 
$v_1' \in BV(\Om; \{0,1\})$.
It also follows\footnote{
Indeed, let $x\in J_{v_1'} \setminus (\cut\cup\Gamma)$
and let $\ga_x$ be an admissible path from $x_0$ to $x$.
Let $B(x)$ be an open ball centered at $x$
and disjoint from $\cut\cup\Gamma$; in particular, every $z\in B(x)$ can be 
reached by a path 
obtained attaching to
$\ga_x$ the segment between $x$ and $z$; notice that
such a path $\ga_z$ is admissible from $x_0$ to $z$, and
${\mathit h}(\ga_z; \cut, \Gamma)={\mathit h}(\ga_x; \cut,\Gamma)$. 
Therefore,
either $v_1' = v_1(u)$ in $B(x)$  or $v_1'=1-v_1(u)$ in $B(x)$, which implies $x \in J_{v_1(u)}$. Hence
$J_{v_1'} \setminus (\cut\cup\Gamma) \subseteq
 J_{v_1(u)} \setminus (\cut\cup\Gamma)$. 
Similarly, also the converse inclusion holds, and \eqref{eq:primopezzosaltovprimo}
follows.
}
 that
\begin{equation}
	\label{eq:primopezzosaltovprimo}
	J_{v_1'} \setminus (\cut\cup\Gamma) = J_{v_1(u)} \setminus (\cut\cup\Gamma).
\end{equation}

We define  $u'\in BV_{\mathrm{constr}}(Y_{\newcutpair}; \{0,1\})$ as the 
constrained lifting 
of  $v_1'$
when $D$ is replaced by $\Omega \setminus \Gamma$.

Recalling also  \eqref{eq:vvvv},  set
$$
v_3' := 
\begin{cases}
v_1' & {\rm in~ the~ exterior~ region~} {\rm ~ to~} \Gamma\cup\Gamma',
\\
1-v_1' & {\rm in~ the~ interior~ region~} {\rm ~to ~} \Gamma\cup\Gamma'.
\end{cases}
$$
Notice that 
$v'_3\in BV(\Om; \{0,1\})$.
By construction, we have 
$$
v_1' = v_1(u'), \qquad 
v_3' = v_3(u').
$$
We claim
that $u'$ satisfies \eqref{eq:saltiuguali}. 
{}From \eqref{eq:saltou} we have
$$
\pi_{\newcutpair,\mfd}(J_{u'})= 
	\big ( J_{v_1'} \setminus (\Gamma \setminus S) \big) \cup 
\big( J_{v_3'} \cap (\Gamma \setminus S) \big),
$$
and our proof is concluded provided we show that, up to a $\hn$\,--\,negligible set,
\begin{equation}\label{eq:saltovvprimo}
	\big ( J_{v_1'} \setminus \Gamma \big) \cup 
\big( J_{v_3'} \cap \Gamma \big) =
\big(  J_{v_1(u)} \setminus \cut) \cup \big( J_{v_3(u)} \cap \cut\big).
\end{equation}
Let us split the left hand side of \eqref{eq:saltovvprimo} as
follows:
\begin{equation}
	\label{eq:saltovprimosplit}
	\begin{split}
	J_{v_1'} \setminus \Gamma &=  \Big( (J_{v_1'} \cap \cut) \setminus \Gamma\Big)
\cup \Big(J_{v_1'} \setminus (\cut\cup\Gamma)\Big),
\\
	J_{v_3'} \cap \Gamma &= \Big(J_{v_3'} \cap  \cut \cap \Gamma\Big)
	\cup \Big((J_{v_3'} \cap \Gamma)\setminus \cut\Big).
\end{split}
\end{equation}
Let us show that, up to a $\hn$\,--\,negligible
set,
\begin{equation}
	\label{eq:secondopezzosaltovprimo}
	(J_{v_1'} \cap \cut) \setminus \Gamma= (J_{v_3(u)} \cap \cut) \setminus \Gamma.
\end{equation}
Let $x\in (J_{v_1'} \cap \cut) \setminus \Gamma$. 
Up to a $\hn$\,--\,negligible
set\footnote{
Here we use again \cite[Theorem 3.84]{AmFuPa:00}.},
we can assume that the approximate tangent spaces to $J_{v_1'}$ and $\cut$ at $x$ coincide.
Let $B(x)$ be an open ball centered at $x$, not intersecting $\Gamma$, and such that
$B(x)\setminus \cut$ consists of two connected components. The same
argument used in the proof of \eqref{eq:primopezzosaltovprimo}  shows that on one component $v_1'=v_1(u)$, while
on the other $v_1'=1-v_1(u)$. Since $x\in J_{v_1'}$,
we have 
$$
x \notin  
J_{v_1(u)}. 
$$
On the other hand, by \eqref{eq:vvvv}, in one component we have
$v_1(u)=v_3(u)$, while in the other component $v_3(u)=v_2(u)=1-v_1(u)$ (where in the last equality
we used \eqref{eq:v1v2}). Thus, $x\in J_{v_3(u)}$. So, up to a $\hn$\,--\,negligible set,
$(J_{v_1'} \cap \cut) \setminus \Gamma \subseteq (J_{v_3(u)} \cap \cut) \setminus \Gamma$.
Arguing similarly for the other inclusion, we get
\eqref{eq:secondopezzosaltovprimo}.

The same argument applies also to prove that, up to a $\hn$\,--\,negligible set,
\begin{equation}
	\label{eq:terzopezzosaltovprimo}
	J_{v_3'} \cap  \cut \cap \Gamma= J_{v_3(u)} \cap  \cut \cap \Gamma, \quad
\end{equation}
and 
\begin{equation}\label{eq:quartopezzosaltovprimo}
	(J_{v_3'} \cap \Gamma)\setminus \cut =(J_{v_1(u)} \cap \Gamma)\setminus \cut. 
\end{equation}
From \eqref{eq:primopezzosaltovprimo}\,--\,\eqref{eq:quartopezzosaltovprimo},
we finally get \eqref{eq:saltovvprimo}.
\end{proof}
\begin{corollary}[\textbf{Independence}]\label{cor:indipendenza}
The minimal value in \eqref{eq:problema_gen} is independent of 
the pair $\cutpair$ of cuts.
\end{corollary}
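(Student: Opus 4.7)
The plan is to reduce the independence statement directly to the combination of Theorem \ref{teo:indipendenza} with the area representation formula \eqref{eq:formulafinale}. The crucial observation is that \eqref{eq:formulafinale} rewrites the total variation of any admissible $u$ as $|Du|(Y_\cutpair) = 2\hn(\pi_{\cutpair,M}(J_u))$, an expression that lives entirely in the base space $M$ and makes no reference to the particular cuts used to build the cover. Thus the only thing one needs to check is that competitors on $Y_\cutpair$ and on $Y_\newcutpair$ produce exactly the same family of projected jump sets.

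Concretely, fix two pairs of cuts $\cutpair$ and $\newcutpair$ and let $m(\cutpair)$, $m(\newcutpair)$ be the corresponding minimum values in \eqref{eq:problema_gen}. Given any $u \in BV_{\mathrm{constr}}(Y_\cutpair;\{0,1\})$, Theorem \ref{teo:indipendenza} produces a $u'\in BV_{\mathrm{constr}}(Y_\newcutpair;\{0,1\})$ satisfying $\pi_{\cutpair,M}(J_u) = \pi_{\newcutpair,M}(J_{u'})$ up to an $\hn$-negligible set. Applying \eqref{eq:formulafinale} on both sides gives
\begin{equation*}
|Du'|(Y_\newcutpair) = 2\hn(\pi_{\newcutpair,M}(J_{u'})) = 2\hn(\pi_{\cutpair,M}(J_{u})) = |Du|(Y_\cutpair),
\end{equation*}
so $m(\newcutpair) \leq |Du|(Y_\cutpair)$. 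Taking the infimum over $u \in BV_{\mathrm{constr}}(Y_\cutpair;\{0,1\})$ yields $m(\newcutpair)\leq m(\cutpair)$, and swapping the roles of $\cutpair$ and $\newcutpair$ gives the reverse inequality.

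I do not expect a real obstacle here: all the combinatorial and measure-theoretic work has been absorbed into Theorem \ref{teo:indipendenza} (matching the two jump sets on $M$) and into the Proposition leading to \eqref{eq:formulafinale} (rewriting the energy as a base-space quantity). The corollary then amounts to a one-line symmetric comparison between two infima that have been shown to range over the same set of values.
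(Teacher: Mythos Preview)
Your proof is correct and follows essentially the same approach as the paper: both apply Theorem \ref{teo:indipendenza} to transfer a competitor from $Y_\cutpair$ to $Y_\newcutpair$, use \eqref{eq:formulafinale} to equate the energies via the common projected jump set, and conclude by symmetry. The only cosmetic difference is that the paper applies the transfer directly to a minimizer $u_{\min}$ rather than to an arbitrary $u$ followed by taking the infimum.
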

\begin{proof}
 Let $\cutpair$, $\newcutpair$ be two pairs of cuts. Let
$u_{\min}\in \BVc$ be a function realizing the minimal value,
call it $\mathscr A(\cutpair)$.
Let $u'\in BV(Y_{\newcutpair}; \{0, 1\})$ be the function satisfying the constraint
on the fibers
given by
Theorem \ref{teo:indipendenza} applied with $u = u_{{\rm min}}$. Then, by \eqref{eq:formulafinale}
and \eqref{eq:saltiuguali}, we have
\[
\mathscr A(\newcutpair) 
\leq 2 \hn(\pi_{\newcutpair,\mfd}(J_{u'})) = 2 \hn(\pi_{\cutpair,\mfd} (J_{{u_{\min}}}))= \valoreminimocut.
\]
Arguing similarly for the converse inequality, we get
$\mathscr A(\newcutpair)=\valoreminimocut$.
\end{proof}
In view of Corollary \ref{cor:indipendenza}, we 
often skip the symbol $\cutpair$ 
in the notation of the cover, and on the minimal value of the area. 
Moreover, we often set 
$$
p := \pi_{\cutpair, \mfd}.
$$
\smallskip

The relations between a constrained double-cover
solution and other notions of solution to the Plateau problem
can be found in \cite{AmBePa:17}.

\subsection{Abstract construction of the double cover}\label{sec:abstract_construction_of_the_double_cover}
The construction of the abstract cover is standard
\cite{Ha:01}:
fix $\polo\in \mfda$, and set $\lacci:=\{\ga \in C\big([0,1]; \mfda\big) \;:\;  \ga(0)=\polo\}$. 
For $\ga\in\lacci$, let $[\ga]$ be the class of paths in $\lacci$ which are homotopic
to $\ga$ with fixed endpoints. We recall that the 
universal cover of $\mfda$
is the pair $(\ucov,\uproj)$, where
$\ucov:=\big\{[\ga] \ : \ \ga\in\lacci \big\}$ 
and $\uproj\colon [\ga]\in \ucov \mapsto \uproj([\ga]):= \ga(1)\in \mfda$.
The topology of $\ucov$ is defined as follows: consider the family
$\mathcal U:=\{B\subseteq \mfd \; : \; B \text{ open ball}\}$,
which is a basis of open sets of $\mfd$. For $B\in \mathcal U$, and for $[\ga]\in \ucov$ such that $\ga(1)\in B$, define
\begin{equation*}
U_{[\ga],B}:=\big\{ [\ga\la] \; : \; \la\in C([0,1]; B), \; \la(0)=\ga(1) \big\}.
\end{equation*}
Then a basis 
for the topology of $\ucov$ is given by
$\widetilde {\mathcal U} :=\{U_{[\ga],B} \; : \; B\in \mathcal U, \; [\ga]\in \ucov, \, \ga(1)\in B\}.$

Let $\piunoa$ be the 
fundamental group of $\mfda$ with base point $\polo$, and let
\begin{equation*}\label{eq:H}
H:=\{[\rho]\in \piuno \; : \; \lkn_2(\rho; \Hole)=0\},
\end{equation*}
which is a normal subgroup of $\piuno$ of 
index two.

For $\ga\in\lacci$, set $\bar\ga(t):=\ga(1-t)$ for all $t\in[0,1]$.
Associated with $H$, 
we can consider the following equivalence relation $\sim_H$ on $\ucov$:
for $[\ga], [\la] \in \ucov$, 
\begin{equation*}
[\ga]\sim_H [\la] \iff  \ga(1)=\la(1), \quad 
\lkn_2(\ga\bar\la; \Hole)=0.
\label{eq:Hrelbis}
\end{equation*}
\noindent We denote by $[\ga]_H$ the equivalence class of $[\ga]\in\ucov$ 
induced by $\sim_H$, and we set
\begin{equation*}\label{eq:abcv}
	\abcva:= \ucov / \sim_H.
\end{equation*}
Letting $\widetilde \projH_H\colon \ucov \to \abcva$ be the canonical 
projection 
induced by $\sim_H$, we endow $\abcva$ with the
corresponding quotient topology. We set
 $\projHM \colon [\ga]_H \in \abcva \mapsto \ga(1) \in \mfda$,
so that we have the following commutative diagram
\begin{equation}\label{eq:schema2}
\xymatrix{
\ucov \ar[r]^{\widetilde \projH_H} \ar[dr]_{\uproj}  & \abcva \ar[d]^{\projHM} \\
& \mfda
}
\end{equation}
and the pair $(\abcva,\projHM)$ is a cover of
$\mfda$, see \cite[Proposition 1.36]{Ha:01}.

Let $(Y,\pi_Y)$ be a cover of $\mfd$, and let $y_0\in\pi_Y^{-1}(\polo)$.
By $(\pi_Y)_*\colon \pi_1(Y,y_0)\to
\piunoa$ we denote the homomorphism defined as
$(\pi_Y)_*([\varrho]):= [\pi_Y\circ \varrho]$.
By \cite[Proposition 1.36]{Ha:01}, we have
\begin{equation}\label{eq:HHH}
(\projHM)_*(\pi_1(\abcv,[x_0]_H))=H,
\end{equation}
where $\pi_1(\abcv,[x_0]_H)$ is the fundamental group of $\abcv$ with base point
the equivalence class $[x_0]_H$ of the constant loop $x_0$.

\begin{Proposition}\label{pro:corollary}
Let $\mathbf{\cut}$ be a pair of cuts.
Then $\cvv$ and $\abcv$ are homeomorphic.
\end{Proposition}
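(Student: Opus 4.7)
The plan is to invoke the classification of covering spaces: two connected covers of the sufficiently nice base $\mfd$ are homeomorphic over $\mfd$ if and only if the images of their fundamental groups in $\piuno$ form conjugate subgroups. Since $H$ is normal of index two, it suffices to verify that $\cvv$ is a connected double cover of $\mfd$ whose associated subgroup is exactly $H$; this then matches the abstract description via \eqref{eq:HHH}.

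First I would verify that $(\cvv,\pi_{\cutpair,\mfd})$ is genuinely a covering projection. Around any $x\in\mfd$ pick a small open ball $B$ disjoint from $\Hole$ and from at least one of $\cut$, $\cut'$. Using the corresponding chart $\chart$ or $\chart'$ and the local parametrizations $\Psi_j$ from \eqref{eq:Psij}, the preimage $\pi_{\cutpair,\mfd}^{-1}(B)$ splits as a disjoint union of two open sets each mapped homeomorphically onto $B$; local triviality at points of $\cut$ or $\cut'$ is provided by the gluing rule \eqref{eq:equivalenza}. Path-connectedness of $\cvv$ follows from the existence, since $\Hole\neq\emptyset$ and $\mfd$ is path-connected, of a loop in $\mfd$ with unoriented linking number $1$ with $\Hole$: its unique lift will join the two sheets over $\polo$.

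The heart of the argument is a monodromy computation. Fix a basepoint $\widetilde\polo\in\pi_{\cutpair,\mfd}^{-1}(\polo)$ (say in $\widetilde\pi((\chart,1))$) and pick a loop $\ga\in C^1([0,1];\mfd)$ at $\polo$, transverse to both $\cut$ and $\cut'$. I would lift $\ga$ step by step, tracking the index $j\in\{1,2,3,4\}$. Away from $\cut\cup\cut'$ one can stay in either chart. At each transverse crossing of $\cut$, one first switches from $\chart$ to $\chart'$ in the exterior $O$ (sending $1\mapsto 3$), traverses $\cut$ inside $\chart'$, and then switches back to $\chart$ in the interior $I$ (sending $3\mapsto 2$): the net effect is the transposition $1\leftrightarrow 2$. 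Crossings of $\cut'$ and chart transitions performed away from the cuts preserve the sheet. Consequently the lift $\widetilde\ga$ is closed if and only if $\#(\ga^{-1}(\cut))$ is even, i.e.\ iff $\lkn_2(\ga;\Hole)=0$, iff $[\ga]\in H$. This proves $(\pi_{\cutpair,\mfd})_*(\pi_1(\cvv,\widetilde\polo))=H$.

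Combined with \eqref{eq:HHH}, the two connected double covers $\cvv$ and $\abcv$ realize the same normal subgroup $H\subset\piuno$, so by the covering-space classification (e.g.\ \cite[Prop.~1.39]{Ha:01}) there is a basepoint-preserving homeomorphism $\cvv\to\abcv$ commuting with the projections onto $\mfd$. The main obstacle will be setting up the sheet-tracking monodromy carefully: each elementary crossing is straightforward, but the bookkeeping across the four indices $\{1,2,3,4\}$ and the two regions $O$, $I$ must be done cleanly before one can concatenate elementary crossings to reach the global conclusion.
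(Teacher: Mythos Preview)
Your approach is correct and coincides with the paper's: both proofs identify the image subgroup $(\pi_{\cutpair,\mfd})_*\bigl(\pi_1(\cvv,y_0)\bigr)$ with $H$ and then invoke the classification of covering spaces from \cite{Ha:01}. The only cosmetic differences are that the paper establishes the single inclusion $(\pi_{\cutpair,\mfd})_*\bigl(\pi_1(\cvv,y_0)\bigr)\leq H$ and then concludes equality by comparing indices (both equal two), whereas you carry out the monodromy computation in full and verify connectedness and local triviality explicitly; the paper cites \cite[Prop.~1.37]{Ha:01} rather than 1.39.
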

\begin{proof}
By \cite[p.~28]{Ha:01}, we can assume that $\polo \notin \cut \cup \cut'$.
Now, let $y_0 \in \pi_{\cutpair,M}^{-1}(x_0)$ and 
 $[\varrho]\in \pi_1(\cvv, y_0)$. Then, $[\varrho]$ changes
sheet in $\cvv$ an even (or zero) number of times; therefore,
assuming without loss of generality $\varrho$ of class $C^1$ and transverse
to $\cut$,
recalling also \eqref{eq:link}, we have
\[
 0 \equiv \#\big((\pi_{\cutpair, M}\circ\varrho)^{-1}(\cut)\big) \equiv \lkn_2(\pi_{\cutpair,M}\circ\varrho; \Hole) \ \ \modtwo, 
\]
which implies $\pi_{\cutpair,M} \circ \varrho \in H$. Hence, $(\pi_{\cutpair,M})_*\big(\pi_1(\cvv, y_0)\big)\leq H$,
and since $H$ and $(\pi_{\cutpair,M})_*\big(\pi_1(\cvv, y_0)\big)$ have the same index,
they must coincide. {}From \eqref{eq:HHH}, we deduce
$$(\projHM)_*(\pi_1(\abcv,[x_0]_H))=(\pi_{\cutpair,M})_*\big(\pi_1(\cvv, y_0)\big).$$
By \cite[Proposition 1.37]{Ha:01}, the proof is complete.
\end{proof}
The homeomorphism between the two covers, which we denote 
\begin{equation}\label{eq:omeo}
\isom \colon \abcv \to \cvv,
\end{equation}
is given for instance in the proof of
\cite[Proposition 1.33]{Ha:01}: for $[\ga]_H \in\abcv$, let $\beta\in C([0,1];\abcv)$
be a path from $[\polo]_H$ to $[\ga]_H$; we uniquely
lift $\projHM\circ \beta$ 
to a path in $\cvv$ with base point $y_0$.
Then, $\isom([\ga]_H)$ is defined as
the endpoint of the lifted path, which turns out to be independent of $\beta$.

Let us define the distance $\dabcv$ on $\abcv$ 
 as follows: for $[\ga]_H,$ $[{\la}]_H \in \abcv$,
\begin{equation}\label{eq:dabcv}
\dabcv([\ga]_H,[\la]_H):=\inf_\beta \sup  \big\{ \sum_l  | \projHM(\beta(t_l))-
\projHM(\beta(t_{l-1})) | \; : \; (t_l)_l\in \mathrm{Part}(\beta) \big\},
\end{equation}
where the infimum runs among all $\beta\in C([0,1]; \abcv)$ connecting $[\ga]_H$ and $[\la]_H$;
for any such $\beta$, $\mathrm{Part}(\beta)$ denotes the collection of all finite partitions
$(t_l)_l$ of $[0,1]$ such that, for every $l$, 
there exist $[\ga_l] \in \widetilde M$
and a ball $B_l\subseteq M$ with $U_{[\gamma_l],B_l}\in \widetilde{\mathcal U}$
such that $\beta([t_{l-1},t_l]) \subset \widetilde \projH_H(U_{[\gamma_l],B_l})$. 

Symmetry, positivity, and the triangular inequality of $\dabcv$
are direct consequences of the definition.
Let us show that $\dabcv([\ga]_H,[{\la}]_H)=0$ implies $[\ga]_H=[{\la}]_H$.
Clearly, we have $\ga(1)={\la}(1)$. 
Fix $\eps>0$, and let
$\beta\in C([0,1], \abcv)$, $N\in \mathbb N$, $(t_l)_l\in \mathrm{Part}(\beta)$, $l\in \{1,\dots, N\}$,
be such that $\sum_{l=1}^N |\projHM(\beta(t_l))-\projHM(\beta(t_{l-1}))| \leq \eps$.
In particular, for $\eps>0$ sufficiently small, the closed curve $\rho$ defined as\footnote{
Here by $[\![x, x']\!]$ we mean the path corresponding to the segment
from $x$ to $x'$, for every $x,\,x'\in \mfd$.}
$$\rho:=[\![\ga(1), \projHM(\beta(t_1))]\!]
\cdots
[\![\projHM(\beta(t_{N-1})), \la(1)]\!]$$ 
is contractible in $\mfd$, which implies that
\begin{equation}\label{eq:linkrho}
\lkn_2(\rho;\Hole)= 0.
\end{equation}
By definition of ${\rm Part}(\beta)$, for every $l\in \{1,\dots, N\}$ there exist $\la_{l,1}$, $\la_{l,2} \in C([0,1]; B_l)$, with
$\la_{l,1}(0)=\la_{l,2}(0)=\ga_l(1)$, and such that $\beta(t_{l-1})=[\ga_l\la_{l,1}]_H$,
$\beta(t_l)=[\ga_l\la_{l,2}]_H$; notice that, since
$[\ga_{l-1} \la_{l-1,2}]_H=\beta(t_{l-1})=[\ga_{l}\la_{l,1}]_H$, we have
\begin{equation}
\label{eq:betall}
\lkn_2 (\ga_{l-1}\la_{l-1,2}\bar\la_{l,1}\bar \ga_l; \Hole)=0.
\end{equation}
Set
$
\rho_l:= \ga_l \la_{l,1} [\![\la_{l,1}(1), \la_{l,2}(1)]\!] \bar\la _{l,2}\bar\ga_l,
$
which is a closed curve in $\mfd$. In particular,
\begin{equation}
\label{eq:contraibile}
\lkn_2(\rho_l; \Hole) = \lkn_2(\la_{l,1} [\![\la_{l,1}(1), \la_{l,2}(1)]\!] \bar\la_{l,2}; \Hole)= 0,
\end{equation}
where last equality follows recalling that $B_l$ is contractible in $\mfd$.

Coupling \eqref{eq:linkrho}, \eqref{eq:betall} and \eqref{eq:contraibile}, we get
\begin{equation*}
\begin{split}
\lkn_2(\ga{\bar\la};\Hole) = & \lkn_2(\ga_0 \la_{0,1} \bar\la_{N,2} \bar\la; S)\\
=& \sum_{l=1}^N \Big( \lkn_2 (\rho_l; \Hole)  +
\lkn_2 (\ga_{l-1}\la_{l-1,2}\bar\la_{l,1}\bar\ga_l; \Hole)  \Big)
+ \lkn_2 (\rho; \Hole) =0 .
\end{split}
\end{equation*}
Hence $[\ga] \sim_H {[\la]}$, and 
the conclusion follows.

Now, we are in the position to establish the 
isometry bewteen the two covers.
We endow $\cvv$ with the distance $\dcv$ defined as follows:
for any $y$, $y' \in \cvv$, we set
\begin{equation}
\label{eq:dcvv}
\dcv\big( y, y'\big)=\inf _\eta\
\sup \big\{\sum_l |\pi_{{\bf \Sigma},M}(\eta(t_l) )- \pi_{{\bf \Sigma},M}(\eta(t_{l-1}))|  \ : \ (t_l)_l\in \Part(\eta) \big \},
\end{equation}
where the infimum runs among all $\eta \in C([0,1]; \cvv)$ connecting $y$ and
$y'$, and $\Part(\eta)$ is the family of all finite partitions $(t_l)_l$ 
of $[0,1]$
such that, for every $l$,
$\eta([t_{l-1}, t_l])$ is contained in a single chart of $\cvv$.
\begin{Proposition}[\textbf{Isometry}]\label{prop:isometria}
The map $\isom$ in \eqref{eq:omeo} is an isometry
between $(\abcv,\dabcv)$ and $(\cvv,\dcv)$.
\end{Proposition}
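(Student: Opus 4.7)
The plan is to reduce the claim to two ingredients: the compatibility $\pi_{\cutpair,\mfd}\circ \isom = \projHM$ of $\isom$ with the two covering projections, and the fact that, thanks to the triangle inequality and the local triviality of both covers, each of the two distance formulas \eqref{eq:dabcv} and \eqref{eq:dcvv} essentially computes the length (total variation) of the underlying base curve in $\mfd$.

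First I would verify $\pi_{\cutpair,\mfd}\circ \isom = \projHM$, which is immediate from the construction of $\isom$ via unique path lifting recalled just before \eqref{eq:dabcv}: given $[\ga]_H\in\abcv$, the path in $\cvv$ whose endpoint defines $\isom([\ga]_H)$ is by construction a lift of $\projHM\circ\beta$, where $\beta$ is any path in $\abcv$ from $[\polo]_H$ to $[\ga]_H$; projecting back by $\pi_{\cutpair,\mfd}$ gives $\projHM\circ\beta$ again, and evaluating at the endpoint yields the asserted equality. Then I fix $[\ga]_H,[\la]_H\in\abcv$, set $y:=\isom([\ga]_H)$, $y':=\isom([\la]_H)$, and use that $\isom$ is a homeomorphism (Proposition \ref{pro:corollary}) to conclude that the assignment $\beta\mapsto\eta:=\isom\circ\beta$ is a bijection between continuous paths in $\abcv$ from $[\ga]_H$ to $[\la]_H$ and continuous paths in $\cvv$ from $y$ to $y'$. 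By the commutation, $\projHM(\beta(t))=\pi_{\cutpair,\mfd}(\eta(t))$ for every $t\in[0,1]$, so the sums appearing inside the suprema in \eqref{eq:dabcv} and \eqref{eq:dcvv} agree termwise for any partition common to the two.

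The remaining, and I expect main, technical point is that the families $\Part(\beta)$ and $\Part(\eta)$ enforce different admissibility conditions: each piece of $\beta$ must lie in a basis neighbourhood of the form $\widetilde\projH_H(U_{[\ga_l],B_l})$, while each piece of $\eta$ must lie in one of the four charts $\Psi_j(\chart),\Psi_{j'}(\chart')$. I would dispose of this by a standard refinement argument: both families are stable under refinement of partitions (for $\Part(\beta)$ this relies on the elementary inclusion $U_{[\ga\mu],B'}\subseteq U_{[\ga],B}$ whenever $B'\subseteq B$ is a smaller ball and $\mu\in C([0,1];B)$ with $\mu(0)=\ga(1)$), the associated sums are monotone nondecreasing under refinement by the triangle inequality in $\R^n$, and, by compactness of $\beta([0,1])$ and $\eta([0,1])$ together with the Lebesgue number lemma applied both to the basis $\widetilde{\mathcal U}$ on $\abcv$ and to the chart cover of $\cvv$, every partition in either family admits a finer partition that lies in both families simultaneously. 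Hence the two suprema coincide; taking the infimum over $\beta$, equivalently over $\eta$, yields $\dabcv([\ga]_H,[\la]_H)=\dcv(y,y')$, which is the claimed isometry.
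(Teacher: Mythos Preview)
Your proposal is correct and follows essentially the same route as the paper: transfer paths via $\eta=\isom\circ\beta$, use the commutation $\pi_{\cutpair,\mfd}\circ\isom=\projHM$ so that the termwise sums agree, and reconcile the two admissibility notions for partitions by refinement. The paper carries this out via an $\eps$-approximation (pick a near-optimal $\beta$, then a near-optimal partition in $\Part(\eta)$, refine so that each piece projects into a ball and hence also lies in $\Part(\beta)$, and conclude one inequality; then ``similarly'' for the other), whereas you argue the two suprema coincide for every fixed $\beta$ by producing a common refinement via the Lebesgue number lemma and the monotonicity of the sums---a slightly more systematic formulation of the same argument.
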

\begin{proof}
Let $[\ga]_H$, $[\la]_H\in \abcv$. For $\eps>0$, let $\beta\in C([0,1]; \abcv)$
be a path from $[\ga]_H$ to $[\la]_H$, realizing
the infimum in \eqref{eq:dabcv} up to a contribution of order $\eps$. Now, set
$\eta:=\isom\circ\beta$; accordingly to \eqref{eq:dcvv}, let $(t_l)_l\in \mathrm{Part}(\eta)$
be such that
$$\dcv(\isom([\ga]_H),\isom([\la]_H)) \leq \sum_l |\pi_{{\bf \Sigma},M}(\eta(t_l)) - \pi_{{\bf \Sigma},M}(\eta(t_{l-1}))| +\eps.$$
Clearly, it is not restrictive to assume that, for every $l$, $\pi_{\cutpair,M}(\eta([t_{l-1},t_l]))\subset B_l$, for some
open ball $B_l\subset \mfd$. Therefore, accordingly to
\eqref{eq:dabcv}, we have $(t_l)_l\in \mathrm{Part}(\beta)$; hence, for every $l$,
\[
|\pi_{\cutpair,M}(\eta(t_l) )- \pi_{\cutpair,M}(\eta(t_{l-1}))|=|\projHM(\beta(t_l)) - \projHM(\beta(t_{l-1}))|,
\]
which implies
\[
\dcv(\isom([\ga]_H),\isom([\la]_H))
\leq \dabcv([\ga]_H,[\la]_H) +2\eps.
\]
By the arbitrariness of $\eps$, we get
$\dcv(\isom([\ga]_H),\isom([\la]_H))\leq \dabcv([\ga]_H,[\la]_H)$.
Similarly, we get the converse inequality.
\end{proof}

Once we have to minimize a functional defined on some functional domain, the 
metric structure (and not only its topology) of the cover becomes relevant: 
the distance function on $Y$ is locally euclidean, and the two methods
described above give isometric covers.

We conclude this section remarking that a large part of what we have described  can be 
generalized \cite{AmBePa:17}:

\begin{itemize}
\item[$\bullet$] 
 to a cover of $\R^n \setminus S$ having 
more then two sheets. Allowing three or more sheets has the 
interesting by-product of modelling singularities in soap
films such as triple junctions (in the plane),
or triple curves (in space), quadruple points, etc.  
\item[$\bullet$] when $S$ is not smooth, 
for instance $S$ the one-skeleton of a polyhedron.
\end{itemize}
We refer to 
\cite{Br:95}, \cite{AmBePa:17} and \cite{BePaPa:17} for a more complete
description for covers of any (finite) degree.

\section{Covers of degree larger than two}\label{sec:covers_of_degree_larger_than_two}
The use of  covers $p := \pi_{\cutpair,M}: Y \to M$ of degree larger than two, coupled with 
vector-valued  BV-functions defined on $Y$ and satisfying a suitable constraint,  
is of interest since for instance:
\begin{itemize}
\item[$\bullet$] when $n=2$, one can  model, among others,  the Steiner
minimal graph problem connecting a finite number $k \geq 3$ of points in 
the plane \cite{AmBePa:17}; 
\item[$\bullet$] when $n=3$, one can consider configurations with singularities (triple curves,
quadruple points etc.),
in particular when $S$ is the one-dimensional skeleton
of a polyhedron;
\item[$\bullet$] choosing carefully the cover,
it is possible to model soap films with higher topological genus, as in the 
example of the one-skeleton of a tetrahedron\footnote{The triple cover
constructed in \cite{BePaPa:17} used to realize a soap 
film with two tunnels is not normal. Roughly, this means that one
of the three sheets is treated in a special way; this is also related
to the Dirichlet condition imposed on the cover in correspondence of the 
boundary of $\Omega$.}
 discussed in  \cite{BePaPa:17}: the resulting soap film seems
not to be modelable  using the Reifenberg approach \cite{Re:60}.
\end{itemize}

Some remarks to be pointed out
are the following:
\begin{itemize}
\item[$\bullet$] in the construction of the cover, and to model interesting
situations, 
it frequently happens to make use\footnote{Invisible wires
can be useful also for covers of degree two.} of what the author of \cite{Br:95}
called ``invisible wires'':
these may have various applications,
such as making globally compatible the cover, or also acting as an obstacle
(see also Section \ref{sec:examples}).
 They are called invisible wires because
the soap film should be supposed to wet the initial wireframe
$S$, but not to wet the invisible wires, 
so that their actual position becomes relevant.
Proving that a soap film has no convenience to wet the invisible wires
for special choices of their position, seems to be an 
open problem, not discussed in \cite{Br:95}. 
We refer to \cite{BePaPa:17} for more. 
\item[$\bullet$] Instead of describing explicitely
the cut and past procedure (as in Section \ref{sec:double_covers_of_R3_deprived_by_a_curve})  and the parametrizing maps
(which becomes more and more complicated as the degree of the 
cover increases)  now it is often convenient
to construct the cover first by orienting 
all portions\footnote{It is worth noticing that
it may happen that now the cut surface is immersed, and 
not embedded.} of
the cut,  then  declaring in a consistent global way
the  permutations for gluing the sheets along the  cut, and 
finally to use the local triviality of the cover, in order to check
the consistency of the gluing. Already in the case of triple covers, 
a relevant fact is the use of permutations with fixed points.

\item[$\bullet$]
Another useful way to describe the cover is 
the  abstract construction (already considered in Section 
\ref{sec:abstract_construction_of_the_double_cover} for double covers):
one has to 
suitably quotient
the universal cover with a subgroup of the fundamental group 
of the complement of $S$\footnote{or, if necessary, of the union of $S$
and the invisible wires.}. A clear advantage
of this approach is its independence of any cut, a fact that, with the
cut and past procedure, requires a proof.
\item[$\bullet$] BV-functions defined on $Y$ could be vector valued, as
in \cite{AmBePa:17}.
Suppose for simplicity to consider a triple cover; then one
choice is to work with BV-functions $u : Y \to \{\alpha, \beta, \gamma\}$, where $\alpha,\beta,\gamma$
are the vertices of an equilateral triangle of $\R^2$, having its barycenter at the origin. If $x$
is any point of $M$ 
and $p^{-1}(x) = \{y_1,y_2,y_3\}$ is the fiber over $x$, then we require $\{u(y_1), u(y_2), u(y_3)\}
= \{\alpha, \beta,\gamma\}$. Clearly, the constraint  now reads as  $\sum_{i=1}^3 u(y_i) =0$.

\noindent Another choice (made also in \cite{BePaPa:17})
is, instead, the following.
Again, suppose for simplicity to consider a triple cover. We can consider
BV-functions  $u : Y \to \{0,1\}$, so that 
if $x$ is any point of $M$ 
and $p^{-1}(x) = \{y_1,y_2,y_3\}$ is the fiber over $x$, then we require 
the constraint
$\sum_{i=1}^3 u(y_i) =1$. Other choices of the constraint are conceivable, but 
we do not want to pursue this issue in the present paper.
\end{itemize}

Once we have specified the domain of the area functional, i.e., a class of 
constrained BV-functions $u$, the variational 
problem becomes, as in Section \ref{sec:double_covers_of_R3_deprived_by_a_curve}, to minimize the total variation of
$u$\footnote{In the case of $u(y) \in \{\alpha,\beta,\gamma\}$,
the total variation is 
using the Frobenius norm $\vert 
T\vert = \sqrt{\sum (t_{ij})^2}$ on 
matrices $T = (t_{ij})$.}. This turns out to be the $(n-1)$-dimensional Hausdorff measure of 
the projection $p(J_u)$ of the jump set $J_u$ of $u$, times a positive
constant $c$, 
related to the codomain of $u$ and possibly to 
the number of sheets. For instance, for $u(y) \in \{\alpha,\beta,\gamma\}$ as above,
then $c=3 \ell$, where $\ell = \vert \beta-\alpha\vert$. 
For $u(y) \in \{0,1\}$, then $c=2$.

In the next section we construct triple covers, in some interesting
cases not considered in \cite{BePaPa:17}, and only partially considered 
in \cite{Br:95}.
\section{Examples}\label{sec:examples} 
In this section all covers are of  degree three; moreover, 
we consider BV functions $u : Y \to \{0,1\}$
with the constraint that the sum of the values of $u$ on the three points
of each fiber equals $1$.

We start with the example of Figure \ref{fig:cravatta}, due to F.J. Almgren
\cite[Fig. 1.9]{Al:01}.

\begin{figure}
\includegraphics[width=0.48\textwidth]{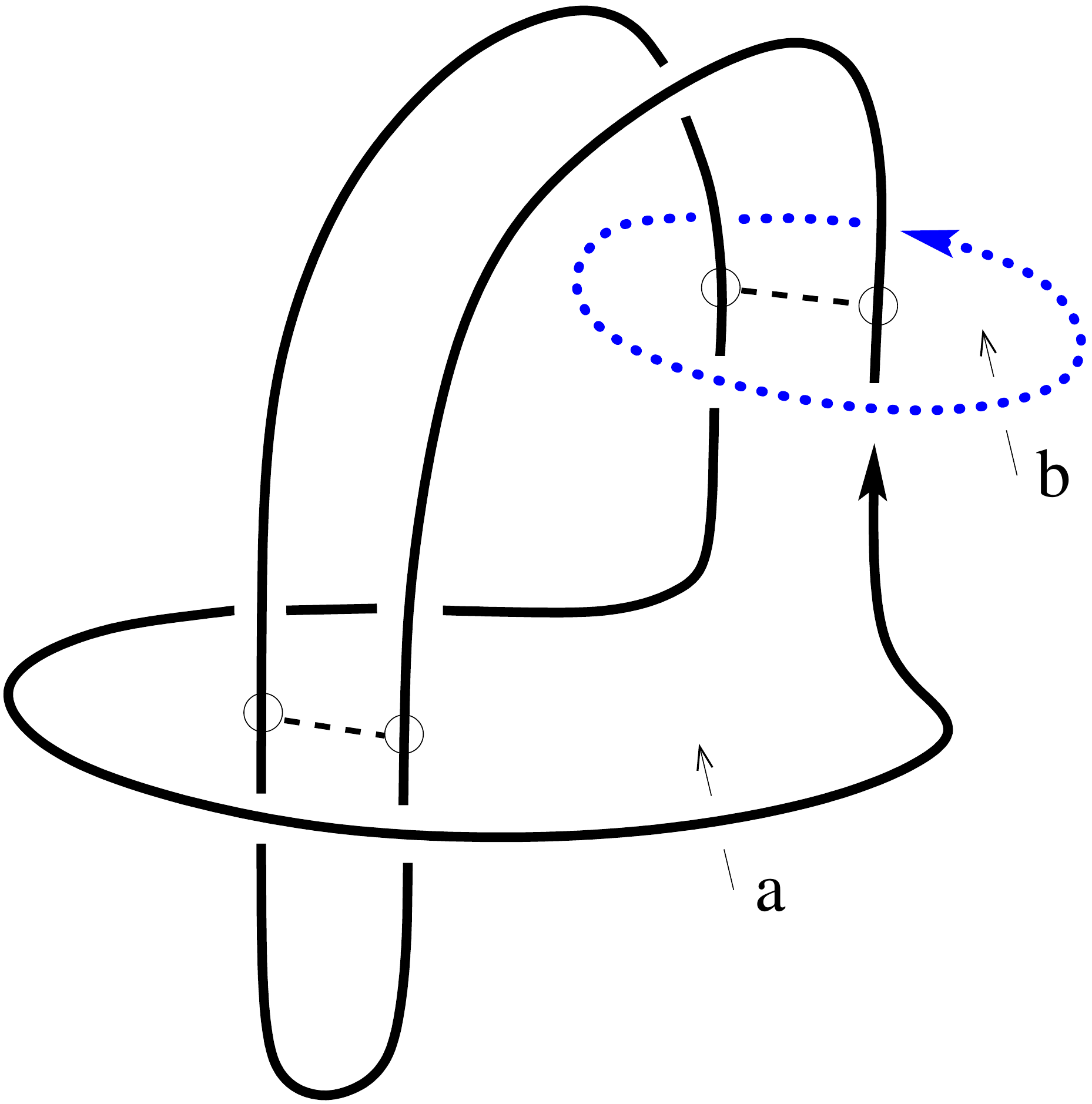}
\includegraphics[width=0.48\textwidth]{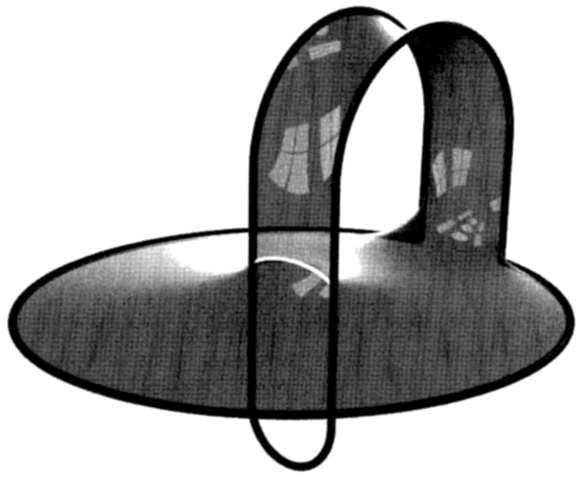}
\caption{\small{Left: an unknotted boundary (bold curve).
The dotted loop represents an invisible wire that is not part of the problem but
essential for the cover construction.
Right: a striking example of minimal film that only partially touches the boundary, due to
Almgren \cite[fig. 1.9]{Al:01}.}}
\label{fig:cravatta}
\end{figure}

\begin{Example}[\textbf{A partially wetted curve}]\label{exa:a_partially_wetted_curve}\rm
Let $\edges$ be the (unknotted) bold curve
in Figure \ref{fig:cravatta} (left). We want to construct a cover of $\R^3
\setminus \edges$ compatible with the soap film in Figure \ref{fig:cravatta} (right),
where the lower part is not wetted.
The presence of the 
triple curve suggests to use a cover of degree at least
three, and indeed three will suffice.
Removal of the unknotted curve from $\R^3$ leaves a set with infinite cyclic fundamental
group (isomorphic to $\Z$).

The only possible cover with three sheets that can be constructed on such a base space would
necessarily imply a cyclic permutation of the three points of the fiber when looping around
the lower portion of the curve, forcing an undesired wetting.
Similarly to the construction described in \cite{BePaPa:17} and in the same spirit as in
many of the examples in \cite{Br:95},  we then add an ``invisible wire'' in the form of a
loop circling the pair of nearby portions of $\edges$ in the upper part.
This is represented by the dotted loop $\iwire$ in Figure \ref{fig:cravatta} (left).
The base space $M$ is then defined as $\R^3 \setminus (\edges \cup \iwire)$.

A cut and past construction of the cover $\prj : \cover \to \base$ can now be defined
by cutting $\base$ along two surfaces bounded by $\edges$ and by $\iwire$ respectively.
The first one resembles the film of Figure
\ref{fig:cravatta} (right), but it has a selfintersection along the dashed (lower) segment and continues
below the disk-like portion touching the whole of $\edges$;
the second one is a small disk bounded by $\iwire$, intersecting the first cutting surface along
the dashed segment.
We now take three copies, numbered $1, 2, 3$, of the cutted version of $\base$ and glue them
along the cutting surfaces according to given permutations of the three sheets,
that we now describe.

The permutation along the lower portion of $\edges$ is chosen as $(2~3)$, namely stratum 1
glues with itself, while strata 2 and 3 get exchanged.
This choice is justified because we do not want to force wetting of that portion, indeed
a function in $\domainF$ defined equal to $1$ in sheet $1$ does not jump along a tight
loop around that part of $\edges$.
This choice in turn requires that we fix the Dirichlet-type condition $u=1$ 
out of a sufficiently large ball on stratum $1$ of the cover.

The permutations on the remaining parts of the cut can then be chosen consistently as
follows:
\begin{itemize}
\item[$(2~3)$] (as already described) in the lower tongue-like portion of the surface bordered
by $\edges$;
\item[$(2~3)$] when crossing the disk-like surface bordered by $\iwire$;
\item[$(1~2)$] when crossing the large disk-like portion of the surface bordered by
$\edges$;
\item[$(1~3)$] when crossing the ribbon-like portion of the surfaces between the two
dashed crossing curves.
\end{itemize}

Note that corresponding to portions of the surface that are wetting the 
bold curve, stratum $1$ is exchanged with a different stratum.

It is a direct check that with this definition the local triviality
of the triple cover around the triple curves,
namely that a small loop around the dashed curves must be contractible in $M$,
is satisfied.
This check consists in showing that
the composition of the three permutations associated with the 
crossings must produce the identity:
$(2~3) (1~2)^{-1} (1~3)^{-1} (1~2) = \id$.
The construction is actually unique up to exchange of sheets $2$ and $3$.


The fundamental group $\pi_1(\base)$  of $\base$ is readily seen to be free of rank $2$.
It can be generated by the two Wirtinger generators schematically denoted by
$a$ and $b$ in Figure \ref{fig:cravatta} left.
We can then finitely present $\pione$ with two generators and no relation as
$$
\pione = <a, b;> .
$$
An abstract construction of the cover can be obtained by considering the homomorphism
$\varphi : \pione \to \SSS_3$ (permutations of the set $\{1,2,3\}$)
defined by the position $\varphi(a) = (1~2)$, $\varphi(b) = (2~3)$
and then defining the subgroup $H < \pione$ as
$$
H = \{ w \in \pione : \varphi(w): 1 \mapsto 1 \} .
$$
It contains all reduced words $w \in \pione$ whose image under $\varphi$
is either the identity $\id \in \SSS_3$ or the transposition $(2~3)$.
It is a direct check that $H$ has index $3$ in $\pione$ and that it is
not normal.

As discussed in \cite{BePaPa:17} for the example of the tetrahedral wire,
also in this example we cannot exclude a priori that a minimizing surface
wets the invisible wire: we have already remarked that
this is a difficulty present in any example
constructed using invisible wires.

Finally, we recall
that soap films that partially wet any knotted curve 
have been proven to  exist in \cite{Pa:92}.
\end{Example}

The soap film of the next  example can be found for instance in \cite[pag. 85 and 
Fig. 4.14]{Is:92}.

\begin{Example}[\textbf{Soap film with triple curves 
on a cubical frame}]\label{exa:soap_film_on_a_cubical_frame}\rm
Let $S$ be the one-dimensional skeleton of the cube (Figure \ref{fig:nscube}).
We want to construct a cover of $\base =\R^3
\setminus S$ which is compatible with the soap film in Figure \ref{fig:nscube}; 
note that here the soap film wets all the edges of the skeleton.

\begin{figure}
\includegraphics[width=0.95\textwidth]{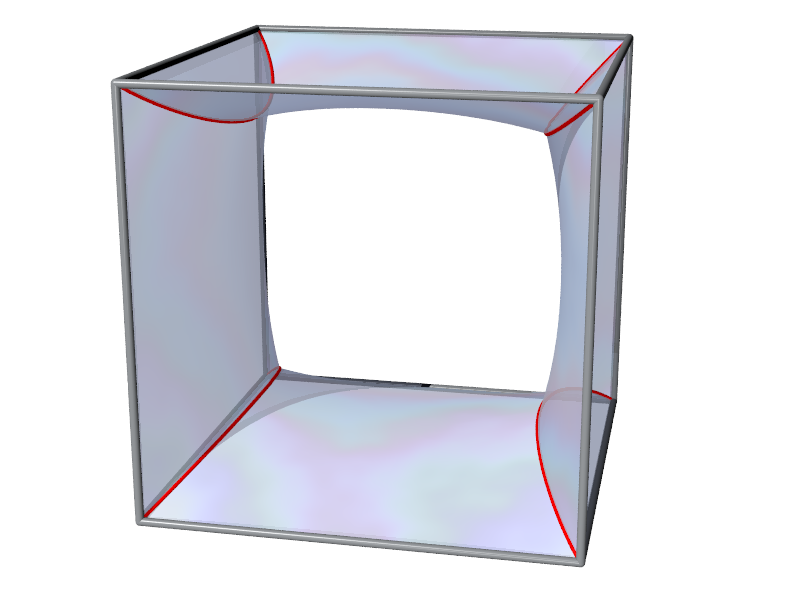}
\caption{\small{A non-simply connected minimal film spanning a cube.
Image obtained using the \texttt{surf} code by E. Paolini.}}
\label{fig:nscube}
\end{figure}

\begin{figure}
\includegraphics[width=0.60\textwidth]{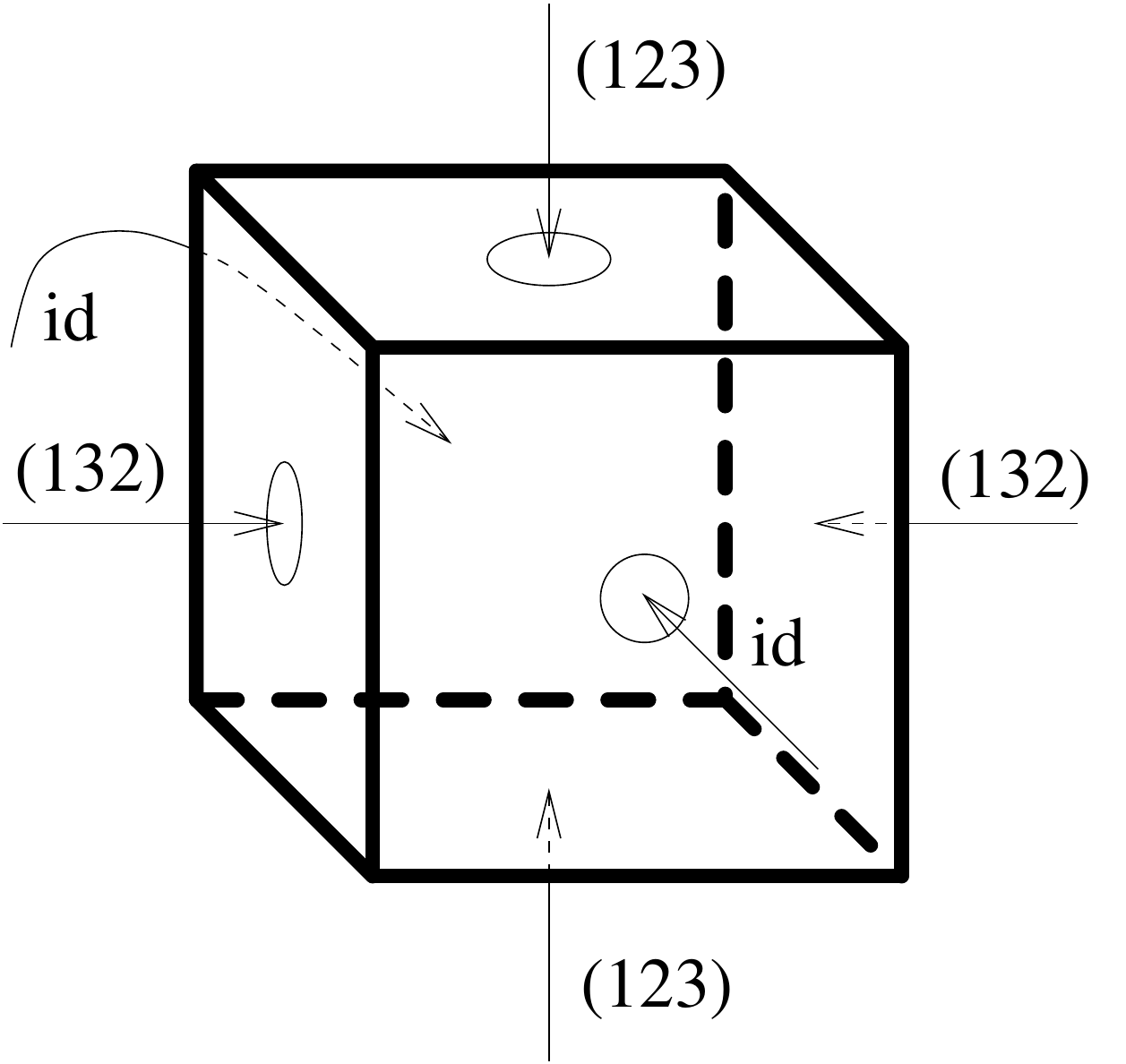}
\includegraphics[width=0.39\textwidth]{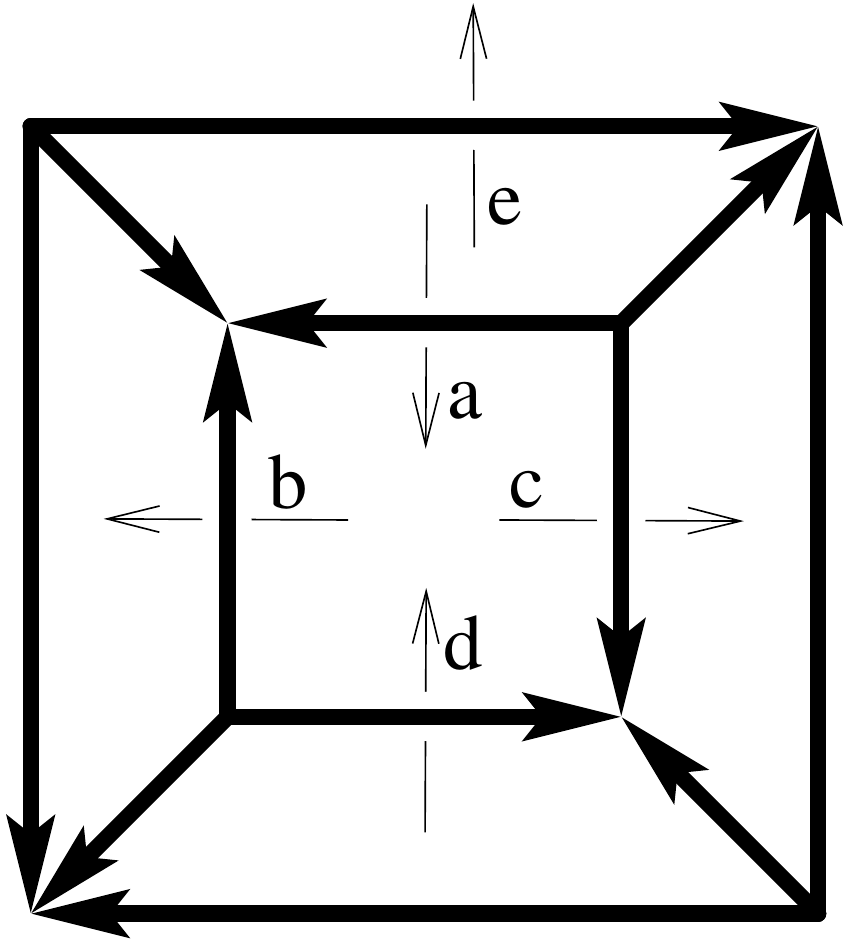}
\caption{\small{Left: orientation of the cut (the faces
of the cube), and permutations of the sheets along the cut.
 Right: the Wirtinger presentation of the fundamental group of the complement
of the one-skeleton of a cube. 
}}
\label{fig:cube}
\end{figure}

Again, we want to model a soap film with triple curves,
but not with quadruple points, and indeed, as we shall see, 
 a triple cover of $\base$ will suffice. Also, there will be no need of any invisible wire.
First of all, we orient the three pairs of opposite faces 
of the cube from 
the exterior to the interior,  as in Figure \ref{fig:cube} (left).
It turns out that we can make use of the cyclic permutations of $\{1,2,3\}$. 
We imagine a cut along the six faces of the cube, and we associate 
the same permutation to opposite faces: the identity permutation $\id$ 
is associated to the 
frontal and back
faces, in order to model the presence of the tunnel. The three powers 
$\id, (1~2~3), (1~3~2)$ 
of the cyclic permutation $(1~2~3)$ are depicted in Figure \ref{fig:cube}.
The presence of the identity permutation on a pair of opposite faces
has the effect of actually not having a cut there. On the other hand, 
a tight loop around an edge turns out in the composition of a power of 
$(1~2~3)$ with the inverse of a different power of 
 $(1~2~3)$, so that the result is either
$(1~2~3)$ or $(1~3~2)$, hence a permutation without 
fixed points, which forces to wet that edge.

Observe that a curve entering a face and exiting from the opposite one
produces the identical permutation of the strata of the cover, hence
it does not necessarily has to meet the projection of the jump set of 
a function $u$.

The fundamental group of $\base$ turns out to be a free group of rank $5$, 
and it can be generated
by the elements of $\pione$ schematically displayed in Figure \ref{fig:cube} 
(right) as $a$, $b$, $c$, $d$, $e$;
the corresponding Wirtinger presentation is
$$
\pione = <a,b,c,d,e;>
$$
(five generators and no relations).
Observe that the orientation of the edges in the figure is chosen such that all five generators loop positively around
the corresponding edge and result 
in the permutation $(1~2~3)$ of the three sheets when compared with the cut/paste
construction.
This allows an abstract definition of the cover by considering the homomorphism $\varphi : \pione \to \SSS_3$
that maps all five generators onto the cyclic permutatioon $(1~2~3)$ and take the normal subgroup $H < \pione$,
kernel of $\varphi$.
A word $w \in \pione$ belongs to $H$ whenever the exponent sum with respect to all generators is a multiple of $3$.

The abstract construction shows that this cover is normal.
Note that this construction is invariant (up to isomorphisms) under the symmetry group of the cube, hence
a minimizer will not be unique unless it is invariant under such symmetry group, which we do not expect to be true
in view of the film displayed in Figure \ref{fig:nscube}.

Minimizers with this topology were also obtained by real experiments \cite{Is:92}.

\end{Example}

The next example (Figure \ref{fig:retract}, found by J.F. Adams in \cite[Appendix]{Re:60}) concerns a soap film
which retracts to its boundary.

\begin{figure}
\includegraphics[width=0.95\textwidth]{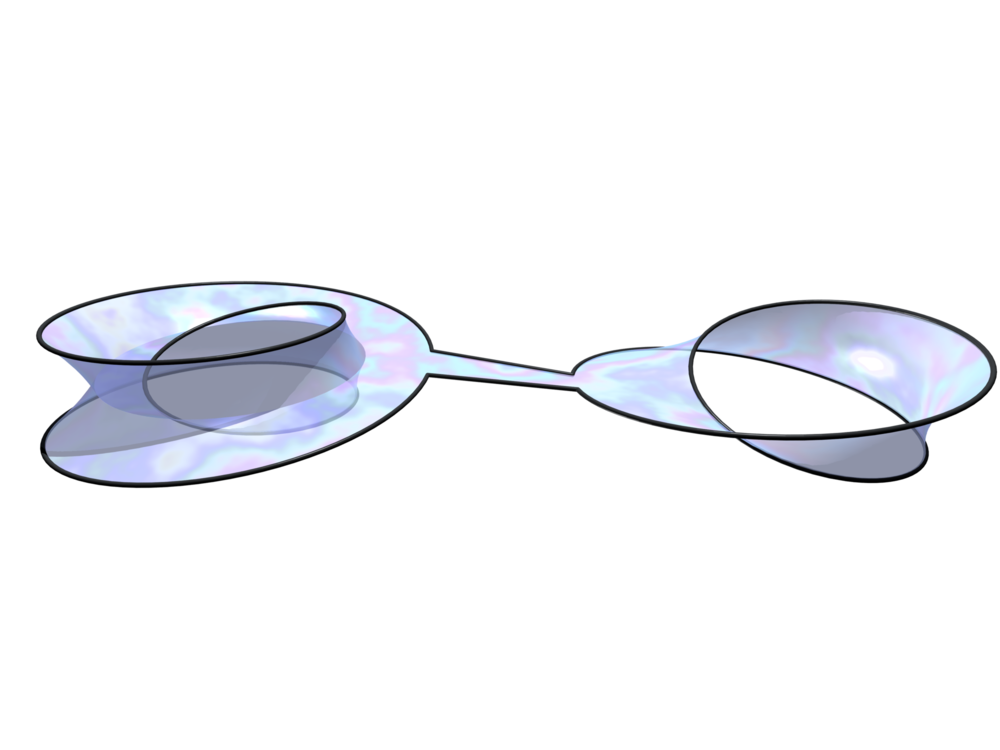}
\caption{\small{A minimal film that retracts to its boundary.
Image provided by E. Paolini.
An example of a film that \emph{deformation} retracts to its
boundary can be found in \cite[fig. 3]{Mo:93}, the same example
can also be found in \cite[fig. 14]{Br:95}}}
\label{fig:retract}
\end{figure}

\begin{Example}\label{exa:triple_moebius_band}\rm 
Let $S$ be the  curve of Figure \ref{fig:retract}: we would like
to consider 
the soap film of the figure as a cut, but in order to construct a 
consistent triple cover, this is not sufficient. Indeed, we add 
an invisible wire in the form of a loop $C$ circling around the 
Mo\"ebius strip on the right; next  we consider as a cut the
union of the soap film in the figure and a disk bounded by $C$.
Of course, this cut has a selfintersection along a diamater
of the disk. Now, take as usual three copies $1,2,3$ of the 
cutting surface and glue them using the permutations as follows:
\begin{itemize}
\item[] $(2~3)$ when crossing the disk bounded by $C$;
\item[] $(1~ 2~ 3)$ on the remaining part of the cut.
\end{itemize}

Observe that the part of the cut on the right hand side is not
orientable: the invisible wire acts in such a way to revert
the cyclic permutation $(1~ 2~ 3)$ when crossing the disk. 

It turns out that a presentation of 
the fundamental group of $M = \R^3 \setminus (S\cup C)$ is
$$
\pi_1(M) =  <a,b ; abab = baba>, 
$$
where $a$ corresponds to a small loop circling around 
$S$, and $b$ corresponds to a short loop circling
around the invisible wire $C$.

The abstract definition of the cover is obtained by considering
the homomorphism $\varphi : \pione \to \SSS_3$ that maps
$a$ to $(1~2~3)$  and $b$
to $(2~3)$\footnote{One verifies that  
$\varphi$ is well defined with respect to 
the relation of the presentation.}. A word belongs to $H < \pione$ 
whenever it consists of the words of $\pione$ that are mapped
through $\varphi$ in a permutation of $\{1,2,3\}$ which fixes $1$: namely,
either the identity $()$ or the transposition $(2~3)$.


\end{Example} 

\begin{Example}\label{exa:octahedron}\rm 
Let  $\edges$ be 
the one-skeleton of a regular octahedron. 
The fundamental group of $\base = \R^3 \setminus \edges$ 
is a free group of rank $5$.
After suitable orientation, each of the $12$ edges of the octahedron can be associated
to an element of $\pione$ corresponding to a loop from the base point (at infinity) that
circles once in the positive sense around it.

Imposing a strong wetting condition \cite{BePaPa:17} at all edges for a cover with three sheets amounts in
forcing the permutation of sheets corresponding to a positive loop around that edge to
be either $(1~2~3)$ or its inverse $(1~3~2)$.
Upon possibly reversing the orientation of some edge we can assume all such permutations to
be $(1~2~3)$.

Local triviality of the cover at points near a vertex then corresponds in requiring that
exactly two of the four edges concurring at that vertex to be ``incoming'', the other two
being ``outgoing''.

A choice of the orientation of the edges consistent with the requirement above
corresponds to travel clockwise along the boundary edges of four of
the eight faces selected in a checkerboard fashion.
The resulting soap film in Figure \ref{fig:octahedron} (top-left) simply consists in
those four faces or on the four remaining faces.

Another consistent choice of orientation consists in travelling around the three diametral
squares in a selected direction.
Two relative minimizers corresponding to this choice are shown in Figure \ref{fig:octahedron}
(top-right and bottom),
the latter consists in a tube-shaped surface with six lunettes attached along six triple curves.

It turns out that there are at least two other non isomorphic $3$-sheeted covers of the same base space,
which however seem not to provide minimizers different from the ones
described above.

\begin{figure}
\includegraphics[width=0.45\textwidth]{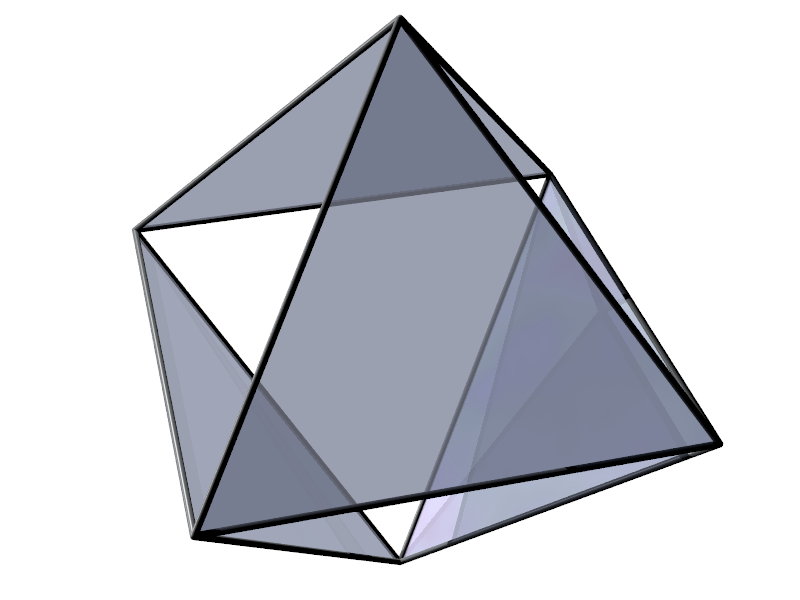}
\includegraphics[width=0.51\textwidth]{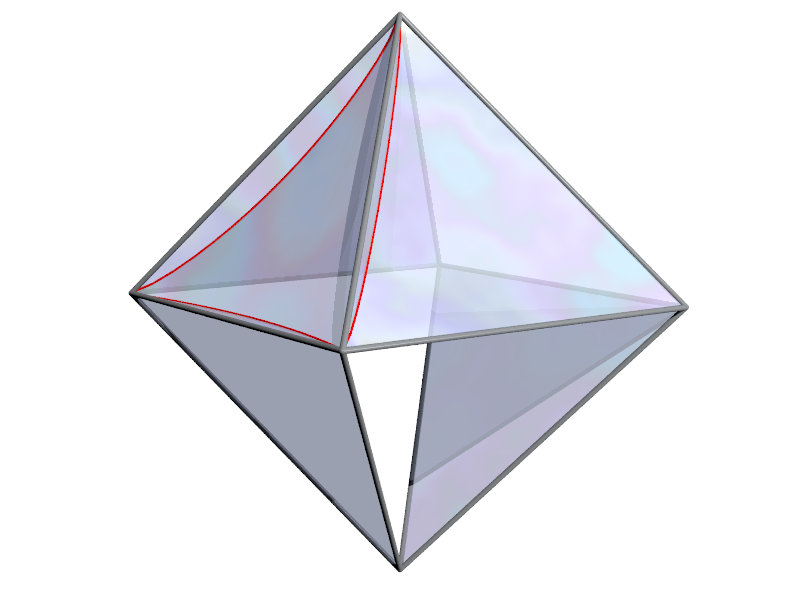}
\\
\includegraphics[width=0.48\textwidth]{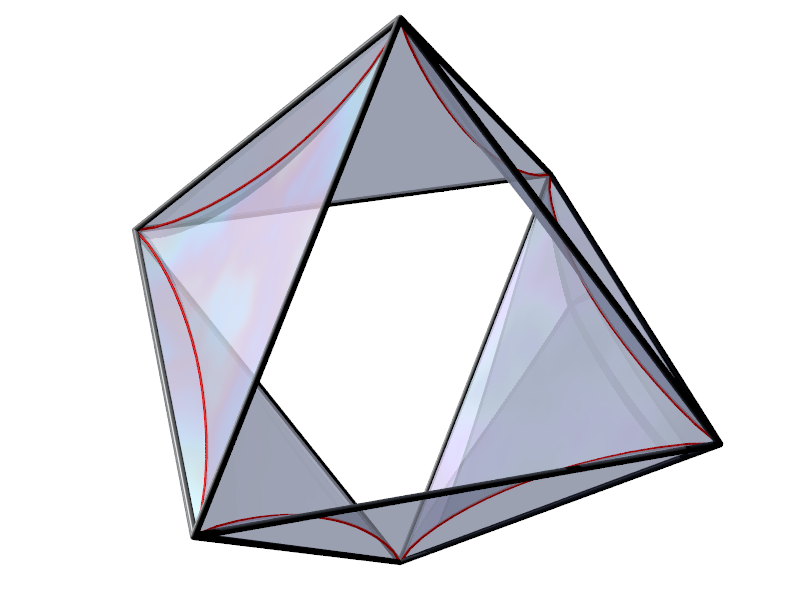}
\caption{\small{Three examples of non-simply connected minimal films spanning the boundary of
a regular octahedron.
Top-left: trivial nonconnected surface consisting in four of the eight faces;
Top-right: surface obtained by starting from five of the eight faces, the result
consists in an isolated triangular face $F$ (after removing its boundary)
plus a film with three triple curves wetting
all the edges of the octahedron that are not 
edges of $F$;
Bottom: surface obtained by starting from six of the eight faces.
Note the presence of six triple curves.
Images obtained using the \texttt{surf} code by E. Paolini.}}
\label{fig:octahedron}
\end{figure}

\end{Example}


\end{document}